\newtheorem{theorem}{Theorem}[section]
\newtheorem{corollary}[theorem]{Corollary}
\newtheorem{definition}[theorem]{Definition}
\newtheorem{lemma}[theorem]{Lemma}
\newtheorem{proposition}[theorem]{Proposition}
\newtheorem{remark}[theorem]{Remark}
\newtheorem{example}[theorem]{Example}
\newcommand{\enne}{\mathbb{N}}
\begin{document}

%=====================titolo e autori =======================================
\title[Atomicity related to non-additive integrability]{Atomicity related to non-additive integrability}

%============Author 1====================
\author[Candeloro]{\bf Domenico Candeloro}
\address{Department of Mathematics and Computer Sciences \\ University of Perugia\\
Via Vanvitelli, 1 - 06123 Perugia (Italy)\\ (orcid id 0000-0003-0526-5334)}
\email{candelor@dmi.unipg.it}

%================Author 2======================
\author[Croitoru]{\bf Anca Croitoru}
\address{\rm Faculty of Mathematics,\\ ''Al.I. Cuza'' University\\ Bd. Carol I, no 11, Ia\c{s}i, 700506, 
(Rom\^{a}nia)}
\email{croitoru@uaic.ro}

%================Author 3======================
\author[Gavrilu\c{t}]{\bf Alina Gavrilu\c{t}}
\address{\rm Faculty of Mathematics,\\ ''Al.I. Cuza'' University \\ Bd. Carol I, no 11, Ia\c{s}i, 700506, 
(Rom\^{a}nia)}
\email{gavrilut@uaic.ro}

%================Author 4======================
\author[Sambucini]{\bf Anna  Rita Sambucini}
\address{\rm Department of Mathematics and Computer Sciences \\ University of Perugia\\
Via Vanvitelli, 1 - 06123 Perugia (Italy) \\  (orcid id 0000-0003-0161-8729)}
\email{anna.sambucini@unipg.it}

%=====================

\keywords{atom,  finitely purely atomic measures, Gould integral,
Birkhoff integral, non-additive measure, vector function}
 \subjclass{28B20, 28C15, 49J53}

\begin{abstract} 
In this paper we present some results concerning Gould integrability of
vector functions with respect to a monotone measure on finitely purely
atomic measure spaces. As an application a   Radon-Nikodym theorem in this setting is obtained.
\end{abstract}
\maketitle

%===============================

\section{Introduction}\label{intro}
In the last years, the field of non-additive measures was intensively 
used  in a wide range of areas such as economics, social sciences,
 biology and philosophy and it gives a mathematical framework for 
describing a situation of conflict or cooperation between intelligent rational players, in order to 
 predict the outcome of a  process.
 In this field, the theory of (pseudo)atoms and monotonicity is used in statistics, game theory, probabilities, artificial intelligence.
\\
Purely atomic measures were studied in literature (in different variants)
due to their special form and their special properties. In this case, the
entire space is assumed to be a finite collection of pairwise disjoint atoms
and an atom can be viewed as a black hole. For instance, Chi\c{t}escu  \cite{c1975,c2001}
and Leung  \cite{l1997} established some relationships with classical problems in $
L^{p}$ spaces, Iona\c{s}cu and Stancu  \cite{is2010} have obtained concrete
independent events in purely atomic probability spaces with geometric
distribution, Matveev  \cite{m2010} has proved that every $\sigma $-finite Borel
measure defined on a Rothberger space is purely atomic, Elton and Hill \cite{eh}
studied the ham sandwich theorem.
\\
The subject of our paper belongs to the non-additivity domain that was
intensively studied by many authors for interesting and important properties (e.g.  \cite{klmp2015,lmp2014,lmp,ms1994,m2008,nm2004,p1994,bcs2015,cs2014,cs2015a,ccgs2015,ccgs2015b,cdms2016}).
In this paper we present some results of Gould integrability  \cite{g1965} on
finitely purely atomic measure spaces. 
The idea is similar to that of \cite{gcmg2009}, where mainly the measures are set-valued and the functions are scalar, 
while in the present research the other product by scalar  is considered, namely vector valued functions and real valued
 and positive measures are discussed.
Convergence and Radon-Nikod\'ym theorems are also obtained in this framework.

The structure of the paper is the following: in Section 2 we give some
preliminaries. Section 3 contains some results on atoms that we shall use in
the sequel; examples are given  to show that some of them do not hold in general in Section 4, 
together with different properties regarding Gould
type integrability on finitely purely atomic measure spaces, such as: a
Lebesgue type theorem of convergence and comparative 
 results among Gould integrability, Choquet integrability, total measurability and boundedness.
Finally, in Section 5 we establish a Radon-Nikod\'ym theorem for purely atomic measures.
%==================================================
\section{Preliminaries}\label{two}

$T$ is an abstract nonvoid set, $\mathcal{P}(T)$ the family of all subsets
of $T$, $\mathcal{A}$ an algebra of subsets of $T$ and $m:\mathcal{A}
\rightarrow [0,+\infty)$ an arbitrary set function, with $m(\emptyset)=0$.
If $A\subseteq T$, then $T\setminus A$ will be denoted by $A^{c}.$ 
\\
We recall some notions that will be used throughout the paper.

\begin{definition}\label{def1} \rm 
( \cite{d1974,d1972,p1995}) $m$ is said to be: 
\begin{itemize}
\item\textit{a monotone measure} if $m(A)\leq m(B),$ for every $A,B\in \mathcal{A}$, with $A\subseteq B$;
\item 
\textit{\ null-additive} if $m(A\cup B)=m(A)$, for every $A,B\in
\mathcal{A}$, with $m(B)=0$; 
\item
\textit{$\sigma$-null-additive} if $m(\bigcup_n A_n)=0$ as soon as $A_n\in \mathcal{A}$ and $m(A_n)=0$ for all $n$.
\item 
 \textit{subadditive} if $m(A\cup B)\leq m(A) + m(B)$, for every $A,B\in\mathcal{A}$; 
\item 
\textit{finitely additive} if $m(A\cup B) = m(A) + m(B)$, for
every $A,B\in\mathcal{A}$, with \mbox{$A\cap B = \emptyset$;} 
\item
 \textit{$\sigma$-subadditive} if $m(\bigcup\limits_{n=1}^{\infty}A_{n}) \leq 
\sum\limits_{n=1}^{\infty}m(A_{n})$, for every 
$\{A_{n}\}_{n\in \mathbb{N}}\subset \mathcal{A}$, so that $
\bigcup\limits_{n=0}^{\infty}A_{n}\in\mathcal{A}$; 
\end{itemize}
\end{definition}

\begin{remark}\label{rem1} \rm
 If $m$ is monotone and subadditive, then $m$ is null-additive. 
 A subadditive monotone measure is sometimes called a submeasure
(\cite{d1974}). 
\end{remark}

\begin{definition}\label{def-v} \rm
Let $m:\mathcal{A} \to [0,+\infty)$ with $m(\emptyset) = 0.$ 
\begin{itemize}
\item[\ref{def-v}.i)] The set function $\overline{m}:\mathcal{P}(T)\to [0,+\infty],$
called \textit{the variation} of $m$, is defined by $\overline{m}(E)=\sup
\{\sum\limits_{i=1}^{n}m(A_{i})\}$, for every $E\in \mathcal{P}(T)$, where
the supremum is extended over all finite families of pairwise disjoint sets $
\{A_{i}\}_{i=1}^{n}\subset \mathcal{A}$ with $A_{i}\subseteq E$, for every $
i\in \{1,\ldots,n\}$. 
\item[\ref{def-v}.ii)] $m$ is said to be \textit{of finite variation on }$\mathcal{A}$
if $\overline{m}(A)<\infty $, for every $A\in \mathcal{A}$ (or,
equivalently, if $\overline{m}(T)<\infty$).
\item[\ref{def-v}.iii)] We consider the set function $\widetilde{m}$ defined by $
\widetilde{m}(E)=\inf \{\overline{m}(A);E\subseteq A,A\in \mathcal{A}\},\;%
\text{for every}\;E\in \mathcal{P}(T).$ 
\end{itemize}
\end{definition}

The statements in the following Remark easily follow by the definitions: some of them are contained 
in \cite[Remark 2.4]{gcmg2009}, though in a more abstract situation.
\begin{remark}\label{rem-bar} \rm \mbox{~} 
\begin{itemize}
\item[\ref{rem-bar}.i)] If $E\in\mathcal{A}$, then in the definition of $\overline{m}$
the supremum could be considered over all finite families of pairwise
disjoint sets $\{A_{i}\}_{i=1}^{n}\subset \mathcal{A}$, so that $
\bigcup\limits_{i=1}^{n}A_{i} = E.$ 
\item[\ref{rem-bar}.ii)] $m(A)\leq \overline{m}(A)$, $\forall A\in\mathcal{A}$. So, if 
$\overline{m}(A) = 0$, then $m(A)=0,$ $\forall A\in\mathcal{A}$. 
\item [\ref{rem-bar}.iii)] Suppose $m$ is monotone and let $A\in\mathcal{A}$ with $m(A) =
0.$ Then $\overline{m}(A) = 0.$ In consequence, if $m$ is monotone, then $
\overline{m}(A) = 0 \Leftrightarrow m(A) = 0$, for every $A\in \mathcal{A}$.
\item [\ref{rem-bar}.iv)] $\overline{m}$ and $\widetilde{m}$ are monotone. 
\item [\ref{rem-bar}.v)] If $m$ is subadditive ($\sigma$-subadditive respectively), then 
$\overline{m}$ is additive ($\sigma$-additive respectively) on $\mathcal{A}$.
\item[\ref{rem-bar}.vi)] If $m$ is finitely additive, then $\overline{m}(E) = \sup
\{m(B)| B \in \mathcal{A}, B\subseteq E\}$, for every $E\in \mathcal{P}(T)$
and $\overline{m}(A) = m(A)$ for every $A\in \mathcal{A}$. 
\item[\ref{rem-bar}.vii)] $\widetilde{m}(A)=\overline{m}(A)$, for every $A\in \mathcal{A}$. 
\end{itemize}
\end{remark}

In what follows, we give some results regarding null-additivity of $\overline{m}$ and $\widetilde{m}$.

\begin{proposition}
If $m$ is null-additive, then $\overline{m}$ is null-additive on $\mathcal{A}$.
\end{proposition}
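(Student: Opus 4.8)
The plan is to prove separately the two inequalities $\overline{m}(A)\le\overline{m}(A\cup B)$ and $\overline{m}(A\cup B)\le\overline{m}(A)$ for arbitrary $A,B\in\mathcal{A}$ with $\overline{m}(B)=0$, which together give the desired equality $\overline{m}(A\cup B)=\overline{m}(A)$, i.e.\ the null-additivity of $\overline{m}$. The first inequality is immediate from the monotonicity of $\overline{m}$ (Remark \ref{rem-bar}.iv), since $A\subseteq A\cup B$. Hence the whole content lies in the reverse inequality, which I would obtain by estimating an arbitrary variation sum over $A\cup B$ by one over $A$ alone.

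First I would record the elementary consequence of the hypothesis $\overline{m}(B)=0$: for every $C\in\mathcal{A}$ with $C\subseteq B$ one has $m(C)=0$. Indeed, the single-set family $\{C\}$ is admissible in the definition of $\overline{m}(B)$, so $m(C)\le\overline{m}(B)=0$ (this is also Remark \ref{rem-bar}.ii combined with monotonicity). This is the device that lets the null-additivity of $m$ come into play.

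Next, to bound $\overline{m}(A\cup B)$, I would fix any finite family $\{C_i\}_{i=1}^n\subset\mathcal{A}$ of pairwise disjoint sets with $C_i\subseteq A\cup B$, and split each $C_i$ along $A$, writing $C_i=(C_i\cap A)\cup(C_i\setminus A)$ as a disjoint union of members of the algebra $\mathcal{A}$. Since $C_i\setminus A\subseteq(A\cup B)\setminus A\subseteq B$, the previous step gives $m(C_i\setminus A)=0$, and the null-additivity of $m$ then yields $m(C_i)=m(C_i\cap A)$. The sets $\{C_i\cap A\}_{i=1}^n$ are again pairwise disjoint, lie in $\mathcal{A}$, and are contained in $A$, so $\sum_{i=1}^n m(C_i)=\sum_{i=1}^n m(C_i\cap A)\le\overline{m}(A)$ directly from the definition of the variation. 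Passing to the supremum over all such families gives $\overline{m}(A\cup B)\le\overline{m}(A)$, and combining with monotonicity closes the argument.

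I do not expect a genuine obstacle here; the only points requiring care are purely bookkeeping ones, namely checking that each piece of the decomposition stays inside $\mathcal{A}$ (guaranteed because $\mathcal{A}$ is an algebra) and that $C_i\setminus A$ is indeed contained in $B$, so that both the null-additivity of $m$ and the defining property of $\overline{m}(A)$ may be invoked legitimately.
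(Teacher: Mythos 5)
Your proof is correct and follows essentially the same route as the paper's: reduce to $\overline{m}(A\cup B)\leq\overline{m}(A)$ via monotonicity of $\overline{m}$, split each set $C_i$ of an admissible family along $A$, use the null-additivity of $m$ to discard the piece lying in $B$, and pass to the supremum. Your two small deviations are both improvements in exposition rather than in substance: observing $C_i\setminus A\subseteq B$ directly lets you dispense with the paper's case distinction on whether $A\cap B=\emptyset$, and your explicit remark that $\overline{m}(B)=0$ forces $m(C)=0$ for every $C\in\mathcal{A}$ with $C\subseteq B$ is the precise justification needed here (the paper passes through $m(B)=0$ and then to $m(C_i\cap A^c)=0$, a step that on its own would require monotonicity of $m$, which this proposition does not assume).
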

\begin{proof} 
Let $A,B\in \mathcal{A}$ with $\overline{m}(B) = 0.$ Since $\overline{m}$ is monotone,
we only have to prove that $\overline{m}(A\cup B)\leq \overline{m}(A)$. 
Let $\{C_{i}\}_{i=1}^{p}\subset \mathcal{A}$, $C_{i}\cap C_{j} = \emptyset$, $i\neq j$ so that 
$\bigcup\limits_{i=1}^{p}C_{i} = A\cup B$. We have two situations.
\\
Suppose that $A\cap B = \emptyset$. Then $B = \bigcup\limits_{i=1}^{p}(C_{i}\cap A^{c})$. 
From $\overline{m}(B) = 0$ we obtain $m(B) = 0.$ So $m(C_{i}\cap A^{c}) = 0,$ 
$\forall i \in \{1,\ldots,p\}.$ From the null-additivity of $m$, it results $m(C_{i}) = m(C_{i}\cap A)$, 
for every $i\in \{1,\ldots,p\}.$ So, $\sum\limits_{i=1}^{p}m(C_{i})=\sum\limits_{i=1}^{p}m(C_{i}\cap A) 
\leq \overline{m}(A)$ which implies that $\overline{m}(A\cup B) \leq \overline{m}(A)$.\\
If $A\cap B \neq \emptyset$ then we have $A\cup B = (A\setminus B) \cup B$ and 
$\overline{m}(A\cup B) = \overline{m}((A\setminus B) \cup B )= \overline{m}(A\setminus B) \leq \overline{m}(A).$
\end{proof}

\begin{proposition}
Suppose $\mathcal{A}$ is a $\sigma$-algebra. If $\overline{m}$ is null-additive, 
then $\widetilde{m}$ is null-additive on $\mathcal{P}(T)$.
\end{proposition}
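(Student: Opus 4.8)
The plan is to exploit the monotonicity of $\widetilde{m}$ (Remark \ref{rem-bar}.iv) to cut the work in half, and then to use the hypothesis that $\mathcal{A}$ is a $\sigma$-algebra to upgrade an ``infimum equal to zero'' into an honest set of $\overline{m}$-measure zero lying in $\mathcal{A}$. Concretely, I want to show that for all $E,F\in\mathcal{P}(T)$ with $\widetilde{m}(F)=0$ one has $\widetilde{m}(E\cup F)=\widetilde{m}(E)$. Since $E\subseteq E\cup F$ and $\widetilde{m}$ is monotone, the inequality $\widetilde{m}(E)\leq\widetilde{m}(E\cup F)$ is automatic, so only $\widetilde{m}(E\cup F)\leq\widetilde{m}(E)$ remains to be proved.

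The crucial preliminary step is the following claim: if $\widetilde{m}(F)=0$, then there exists a single set $B\in\mathcal{A}$ with $F\subseteq B$ and $\overline{m}(B)=0$. To obtain it, I would use the definition of $\widetilde{m}$ as an infimum: for each $n\in\enne$ pick $B_{n}\in\mathcal{A}$ with $F\subseteq B_{n}$ and $\overline{m}(B_{n})<\tfrac{1}{n}$. Setting $B=\bigcap_{n}B_{n}$, which belongs to $\mathcal{A}$ precisely because $\mathcal{A}$ is a $\sigma$-algebra, one has $F\subseteq B$ and, by monotonicity of $\overline{m}$, $\overline{m}(B)\leq\overline{m}(B_{n})<\tfrac{1}{n}$ for every $n$, hence $\overline{m}(B)=0$. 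This is the point where the $\sigma$-algebra assumption is indispensable and, I expect, the main (if modest) obstacle: null-additivity of $\overline{m}$ is a statement about sets of measure \emph{exactly} zero, so merely having covers of $F$ of arbitrarily small variation would not suffice; the countable intersection is what manufactures the exact null set.

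With $B$ in hand the conclusion follows quickly. Let $A\in\mathcal{A}$ be arbitrary with $E\subseteq A$. Then $E\cup F\subseteq A\cup B$ and $A\cup B\in\mathcal{A}$, so by definition of $\widetilde{m}$ and by the null-additivity of $\overline{m}$ applied to $A$ and $B$ (using $\overline{m}(B)=0$),
\[
\widetilde{m}(E\cup F)\leq\overline{m}(A\cup B)=\overline{m}(A).
\]
Since this holds for every $A\in\mathcal{A}$ containing $E$, taking the infimum over all such $A$ yields $\widetilde{m}(E\cup F)\leq\widetilde{m}(E)$. Combining this with the reverse inequality from monotonicity gives $\widetilde{m}(E\cup F)=\widetilde{m}(E)$, which is exactly the null-additivity of $\widetilde{m}$ on $\mathcal{P}(T)$.
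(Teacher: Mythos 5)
Your proof is correct and follows essentially the same route as the paper: both reduce to one inequality by monotonicity of $\widetilde{m}$, use the $\sigma$-algebra hypothesis to turn the approximating sets $B_{n}$ into a single set of $\mathcal{A}$ with $\overline{m}$-measure zero covering $F$ (the paper passes to a decreasing sequence $B_n\searrow B'$, you take $\bigcap_n B_n$ directly, which is the same device), and then apply null-additivity of $\overline{m}$ to $A\cup B$ before taking the infimum. Your explicit remark on why the exact null set is needed is a nice touch, but the argument is the paper's.
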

\begin{proof}
 Let $A,B\in \mathcal{P}(T)$ with $\widetilde{m}(B)=0.$ Since $\widetilde{m}$ is monotone, 
we only have to prove that $\widetilde{m}(A\cup B) \leq \widetilde{m}(A)$. Because $\widetilde{m}(B) = 0,$ 
for every $n\in \mathbb{N}$ there exists $B_{n}\in\mathcal{A}$, $B\subseteq B_{n}$ so that
 $\overline{m}(B_{n})<n^{-1}$. It results $\lim\limits_{n\to \infty}\overline{m}(B_{n}) = 0.$ 
Without any loss of generality, we may suppose that  $B_{n}\searrow B^{\prime}$. 
So, $B\subseteq B^{\prime}$ and $B^{\prime}\in\mathcal{A}$. Since $\overline{m}$ is  
monotone, it follows that $\overline{m}(B^{\prime})\leq\overline{m}(B_n)$ for all $n$,
 hence $\overline{m}(B^{\prime})=0.$ Now, let $C\in\mathcal{A}$ be arbitrary with $A\subseteq C.$ 
Then $\widetilde{m}(A\cup B) \leq \overline{m}(B^{\prime}\cup C) = \overline{m}(C)$, 
whence $\widetilde{m}(A\cup B) \leq \widetilde{m}(A)$, which finishes the proof.
\end{proof}

%=========================================
\section{Atoms}\label{three}
In this section some properties about atomicity and total measurability are investigated. We recall that
\begin{definition}\label{d-atom} \rm 
\begin{itemize}
\item  A set $A\in $ $\mathcal{A}$ is said to be an \textit{atom} of $m$
if $m(A)>0$ and for every $B\in $ $\mathcal{A}$, with $B\subset A$, we have
 $m(B)=0$ or $m(A\setminus B)=0$. 
\item For further reference, atoms can be introduced also for vector-valued measures: if $m$ takes values in a Banach space $X$, 
a {\em null set} is an element $N\in \mathcal{A}$ such that $m(B)=0$ for every measurable $B\subset N$; and an 
{\em atom} for $m$ is any non-null set $A\in \mathcal{A}$ such that, for every measurable $B\subset A$ 
one has that $B$ is null or $A\setminus B$ is null.
\item  $m$ is said to be \textit{finitely purely atomic} (and  $T$ 
\textit{a finitely purely atomic space}) if there is a finite
disjoint family $\{A_{i}\}_{i=1}^{n}\subset \mathcal{A}$ of atoms of $m$ so
that $T=$ $\underset{i=1}{\overset{n}{\cup }}A_{i}$.
\end{itemize}
\end{definition}
\begin{lemma}\label{1di3} {\rm (\cite[Remark 3.7]{gcmg2009})}
Let $m:\mathcal{A}\to [0,+\infty)$ be a non-negative set function, with $
m(\emptyset) = 0$ and let $A\in \mathcal{A}$ be an atom of $m$.
\begin{itemize}
\item[\rm \ref{1di3}.1)] If $m$ is monotone and the set $B\in \mathcal{A}$ is so that $B\subseteq
A $ and $m(B) >0$, then $B$ is also an atom of $m$ and $m(A \setminus B) =
0.$ Moreover, if $m$ is null-additive, then $m(B) = m(A)$.
\item[\rm \ref{1di3}.2)]
If $m$ is monotone and null-additive, then for every finite partition $
\{B_{i}\}_{i=1}^{n}$ of $A$, there exists only one $i_{0}\in
\{1,2,\ldots,n\} $ so that $m(B_{i_{0}}) = m(A)$ and $m(B_{i}) = 0$ for
every $i\in \{1,\ldots,n\}$, $i\neq i_{0}$.
\end{itemize}
\end{lemma}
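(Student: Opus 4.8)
The plan is to read off both parts directly from the defining property of an atom, using monotonicity to transfer the vanishing of $m$ from complements relative to $A$ to complements relative to $B$, and using null-additivity only at the end to recover equality of the values.

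For part \ref{1di3}.1), I would first dispose of the degenerate case $B=A$, where $m(A\setminus B)=m(\emptyset)=0$ and $B=A$ is an atom by hypothesis, so there is nothing to prove (this case is worth isolating because the atom definition is phrased with the strict inclusion $B\subset A$). Assuming then $B\subsetneq A$ with $m(B)>0$, the atom property of $A$ applied to $B$ forces $m(B)=0$ or $m(A\setminus B)=0$; since $m(B)>0$, we must have $m(A\setminus B)=0$. To see that $B$ is itself an atom, I would take an arbitrary measurable $C\subset B$ and apply the atom property of $A$ to $C$ (legitimate, since $C\subset B\subseteq A$ gives $C\subsetneq A$): either $m(C)=0$, and we are done, or $m(A\setminus C)=0$, in which case monotonicity together with the inclusion $B\setminus C\subseteq A\setminus C$ yields $m(B\setminus C)\le m(A\setminus C)=0$. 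Thus for every $C\subset B$ one has $m(C)=0$ or $m(B\setminus C)=0$, and since $m(B)>0$, $B$ is an atom. For the last assertion, writing $A=B\cup(A\setminus B)$ with $m(A\setminus B)=0$, null-additivity of $m$ gives $m(A)=m(B)$.

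For part \ref{1di3}.2), let $\{B_i\}_{i=1}^n$ be a finite partition of $A$. I would first argue that not all the $B_i$ can be null: if $m(B_i)=0$ for every $i$, then applying null-additivity inductively to the finite union $A=\bigcup_{i=1}^n B_i$ (peeling off one null block at a time) would give $m(A)=0$, contradicting $m(A)>0$. Hence there is an index $i_0$ with $m(B_{i_0})>0$. Since $B_{i_0}\subseteq A$, part \ref{1di3}.1) yields $m(B_{i_0})=m(A)$ and $m(A\setminus B_{i_0})=0$. As $B_i\subseteq A\setminus B_{i_0}$ for every $i\neq i_0$, monotonicity gives $m(B_i)\le m(A\setminus B_{i_0})=0$, so $m(B_i)=0$ for all $i\neq i_0$. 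Uniqueness is then immediate: a second index $j\neq i_0$ with $m(B_j)>0$ would, by the same argument, force $m(B_{i_0})=0$, a contradiction.

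The argument is essentially a bookkeeping exercise once the atom property is available, and I do not expect a genuine obstacle. The two points that deserve a little care are the degenerate case $B=A$ in part \ref{1di3}.1), where the strict-inclusion clause of the atom definition does not apply verbatim, and the correct inductive use of null-additivity for the $n$-fold union in part \ref{1di3}.2), where one must remove one null block at a time rather than invoke finite additivity, which is not assumed here.
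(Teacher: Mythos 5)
Your proof is correct, but it follows a genuinely different route from the paper's: the paper gives no direct argument at all, disposing of the lemma in one line by observing that it is the singleton-valued special case of \cite[Remark 3.7]{gcmg2009}, so the actual verification is delegated to that earlier set-valued paper. Your argument is instead self-contained and elementary, and it has the merit of making explicit which hypothesis does which job: monotonicity alone transfers the atom property from $A$ to a subset $B$ with $m(B)>0$ (via the inclusion $B\setminus C\subseteq A\setminus C$, giving $m(B\setminus C)\leq m(A\setminus C)=0$ in the nontrivial branch), while null-additivity enters only at the two points where values must be compared, namely to get $m(B)=m(A)$ from $m(A\setminus B)=0$ and, in part \ref{1di3}.2), to exclude an all-null partition by peeling off one null block at a time. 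The two subtleties you isolate are real and correctly handled: the atom definition is phrased with the strict inclusion $B\subset A$, so the case $B=A$ must be treated separately (and your check that $C\subsetneq B\subseteq A$ forces $C\subsetneq A$, legitimizing the application of the atom property of $A$ to $C$, is exactly the point where a careless write-up could slip); and since finite additivity is not among the hypotheses, the union $A=\bigcup_{i=1}^{n}B_{i}$ of null blocks must indeed be handled by iterating $m(E\cup B_i)=m(E)$ rather than by summing. What the paper's approach buys is brevity and consistency with the multimeasure framework it builds on; what yours buys is a proof readable without consulting \cite{gcmg2009} and a precise accounting of where monotonicity ends and null-additivity begins.
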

\begin{proof}
It is enough to assume that the values of the multimeasure in   \cite[Remark 3.7]{gcmg2009} are (real) singletons.
\end{proof}

\begin{example}\label{es1} \rm \mbox{}
\begin{itemize}
\item[\ref{es1}.i)] 
 Let $T = \{a,b,c\},$ $\mathcal{A} = \mathcal{P}(T)$ and
\[  m(A) =
\begin{cases}
2, & \text{if } A=T \\
1, & \text{if } A = \{a,b\} \text{ or } A= \{c\} \\
0, & \text{otherwise}.
\end{cases}
\]
We remark that $A_{1} = \{a,b\}$ and $A_{2} = \{c\}$ are disjoint
atoms of $m$ and $T = A_{1}\cup A_{2}$. So $m$ is finitely purely atomic. 
\item[\ref{es1}.ii)] 
 Let $T$ be a countable set, $\mathcal{A} = \{A\subseteq T| A$ is
finite or $A^{c}$ is finite$\}$ and $m:\mathcal{A}\to [0,\infty)$ defined
for every $A\in \mathcal{A}$ by
\[ m(A) =
\begin{cases}
0, & \text{if } A \text{ is finite } \\
1, & \text{if } A^{c} \text{ is finite}.
\end{cases} \]
\end{itemize}
Then every set $A\in \mathcal{A}$, such that $A^{c}$ is finite, is an atom
of $m$.
\end{example}

\begin{remark}\label{rem3}\rm (\cite[Remark 3.8]{gcmg2009})
 If $m$ is monotone, then the following statements are equivalent:
\begin{itemize}
\item  $m$ is finitely purely atomic $\Longleftrightarrow$
  $\overline{m}$ is finitely purely atomic on $\mathcal{A}
\Longleftrightarrow \widetilde{m}$ is finitely purely atomic on $\mathcal{A}.$
\item
 If $m$ is a null-additive monotone measure and $A\in \mathcal{A}$
is an atom of $m$, then $\overline{m}(A)=m(A).$\\ Indeed, let $
\{B_{i}\}_{i=1}^{n}$ be a partition of $A$. According to Lemma \ref{1di3}.2), there
is a unique $i_{0}\in \{1,\ldots ,n\}$ such that $m(B_{i_{0}})=m(A)$ and $
m(B_{i})=0$ for every $i\in \{1,\ldots ,n\}$, $i\neq i_{0}$. So, we have $
\sum\limits_{i=1}^{n}m(B_{i})=m(A)$, which implies that $\overline{m}
(A)=m(A) $.
\item
 If $m$ is a finitely purely atomic subadditive monotone
measure, then $m$ is of finite variation.\\ Indeed, suppose $T =
\bigcup\limits_{i=1}^{p}A_{i}$, where $\{A_{i}\}_{i=1}^{p}\subset \mathcal{A}
$ are pointwise disjoint atoms of $m$. Then 
\[\overline{m}(T) = \overline{m}
(\bigcup\limits_{i=1}^{p}A_{i}) = \sum\limits_{i=1}^{p}\overline{m}(A_{i}) =
\sum\limits_{i=1}^{p}m(A_{i})<\infty.\] 
\end{itemize}
\end{remark}

In order to state our next theorems,  a result of \cite{lmp} will be presented.
In the sequel let $T$ be a locally compact Hausdorff topological space, $
\mathcal{K}$ be the lattice of all compact subsets of $T$, $\mathcal{B}$ be
the Borel $\sigma$-algebra (that is the smallest $\sigma$-algebra containing
$\mathcal{K}$) and $\tau$ be the class of all open sets. For a study on this subject 
it is possible to see also \cite{p1995a}.

\begin{definition}\rm
A set function $m:\mathcal{B}\rightarrow \lbrack 0,+\infty )$
is called \textit{regular} if for each set $A\in \mathcal{B}$ and each $
\varepsilon >0,$ there exist $K\in \mathcal{K}$ and $D\in \tau $ such that $
K\subseteq A\subseteq D$ and $m(D\setminus K)<\varepsilon .$ 
\end{definition}

\begin{theorem}\label{cor-at}
{\rm \cite[Theorem 4.6]{lmp}} Let $m:\mathcal{B}\rightarrow \lbrack 0,+\infty )$ be a
regular null-additive monotone set function. If $A\in \mathcal{B}$ is an
atom of $m$, then there exists a unique point $a\in A$ such that $
m(A)=m(\{a\})$ and  $m(A\setminus \{a\})=0.$
\end{theorem}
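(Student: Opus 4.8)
The plan is to first replace the atom $A$ by a compact atom of the same mass, then to locate the mass-carrying point as the intersection of a suitable family of compact sets, and finally to verify that this single point really does carry the whole mass. Write $c:=m(A)>0$. Applying regularity to $A$ with $\varepsilon=1/n$ produces compact $K_n$ and open $D_n$ with $K_n\subseteq A\subseteq D_n$ and $m(D_n\setminus K_n)<1/n$; monotonicity gives $m(A\setminus K_n)\le m(D_n\setminus K_n)<1/n$. Since $A$ is an atom, for each $n$ either $m(K_n)=0$ or $m(A\setminus K_n)=0$; but $m(K_n)=0$ would force, by null-additivity, $m(A)=m((A\setminus K_n)\cup K_n)=m(A\setminus K_n)<1/n$, which is impossible once $1/n<c$. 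Hence for large $n$ we have $m(A\setminus K_n)=0$, and fixing such a $K:=K_n$ yields a compact set $K\subseteq A$ with $m(A\setminus K)=0$ and, again by null-additivity, $m(K)=c$; in particular $K$ is itself an atom by Lemma \ref{1di3}.

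Second, I would encode the atom dichotomy as a two-valued set function. For a Borel $B\subseteq A$ the atom property together with null-additivity shows that exactly one of $m(B)=0$, $m(A\setminus B)=0$ holds (if both vanished, null-additivity would give $m(A)=0$); set $\mu(B)=1$ in the second case and $\mu(B)=0$ in the first. One checks readily that $\mu$ is monotone and finitely additive with values in $\{0,1\}$ and $\mu(A)=1$; in particular $\mu$ is finitely subadditive. Consider the family $\mathcal F$ of all compact $F\subseteq A$ with $\mu(F)=1$. It contains $K$, and it is closed under finite intersections (if $\mu(F_1)=\mu(F_2)=1$ then $m(A\setminus F_1)=m(A\setminus F_2)=0$, whence $m(A\setminus(F_1\cap F_2))=0$ by finite null-additivity), so every finite subfamily has nonempty intersection. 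Since all members are compact, hence closed, subsets of the compact set $K$, the finite intersection property yields $P:=\bigcap\mathcal F\neq\emptyset$.

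Third, I would show $P$ is a single point and that this point carries the mass. If $x\neq y$ lay in $P$, choose by the Hausdorff property disjoint open $U\ni x$, $V\ni y$ and put $F_1=K\setminus V$, $F_2=K\setminus U$, which are compact with $F_1\cup F_2=K$; subadditivity of $\mu$ forces $\mu(F_1)=1$ or $\mu(F_2)=1$, so $F_1\in\mathcal F$ or $F_2\in\mathcal F$, whence $P\subseteq F_1$ (excluding $y$) or $P\subseteq F_2$ (excluding $x$), a contradiction. Thus $P=\{a\}$ with $a\in K\subseteq A$. It remains to prove $\mu(\{a\})=1$. Suppose not, so $m(\{a\})=0$ and $m(A\setminus\{a\})=c>0$. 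If some compact $L\subseteq A\setminus\{a\}$ had $m(L)>0$, then by Lemma \ref{1di3} $m(A\setminus L)=0$, i.e.\ $\mu(L)=1$, giving $L\in\mathcal F$ and $a\in P\subseteq L\subseteq A\setminus\{a\}$, absurd. Hence every compact subset of $A\setminus\{a\}$ is $m$-null; applying regularity to $A\setminus\{a\}$ to obtain a compact $L\subseteq A\setminus\{a\}\subseteq D$ with $D$ open and $m(D\setminus L)<\varepsilon$, and using $m(L)=0$ with null-additivity, we get $m(A\setminus\{a\})\le m(D)=m(D\setminus L)<\varepsilon$ for every $\varepsilon$, so $m(A\setminus\{a\})=0$, contradicting $c>0$. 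Therefore $\mu(\{a\})=1$, that is $m(A\setminus\{a\})=0$ and, by null-additivity, $m(\{a\})=m(A)$.

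Finally, uniqueness is immediate: if $a,b$ both satisfy the conclusion with $a\neq b$, then $\{b\}\subseteq A\setminus\{a\}$ gives $m(\{b\})\le m(A\setminus\{a\})=0$, contradicting $m(\{b\})=m(A)>0$. The main obstacle is the last verification that the distinguished point actually carries the full mass: since a monotone $m$ need not be continuous from above, one cannot simply pass to a limit along the shrinking compacts, and it is precisely here that regularity (inner and outer approximation by compact and open sets) combined with the atom dichotomy must be exploited. The Hausdorff separation step, which guarantees that $P$ collapses to a single point, is the other place where the topological hypotheses enter.
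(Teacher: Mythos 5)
Your proof is correct. Note, however, that the paper does not actually prove Theorem \ref{cor-at}: it is quoted verbatim from \cite[Theorem 4.6]{lmp}, so there is no in-paper argument to compare yours against; what you have produced is an independent, self-contained proof of the imported result, using only the paper's stated hypotheses together with Lemma \ref{1di3}. Your route --- (i) use regularity, the atom dichotomy and null-additivity to replace $A$ by a compact set $K\subseteq A$ of full mass; (ii) encode the dichotomy as a two-valued, monotone, finitely additive (hence finitely subadditive) set function $\mu$ on the Borel subsets of $A$, and extract, via the finite intersection property inside the compact set $K$ and Hausdorff separation, a single point $a$ in the intersection of all compact sets of full mass; (iii) rule out $m(\{a\})=0$ by showing that every compact subset of $A\setminus\{a\}$ would then be null (by Lemma \ref{1di3}.1)) and squeezing $m(A\setminus\{a\})$ with outer regularity plus null-additivity --- checks out step by step against the paper's definitions; in particular $\{a\}$ is compact, so $A\setminus\{a\}\in\mathcal{B}$ and regularity may legitimately be applied to it, and the uniqueness argument is immediate. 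One cosmetic slip: the members of your family $\mathcal{F}$ are compact subsets of $A$, not necessarily of $K$, so the sentence ``all members are closed subsets of the compact set $K$'' is not literally true as stated; the harmless fix is to intersect each $F\in\mathcal{F}$ with $K$, which changes nothing since $\mathcal{F}$ contains $K$ and is closed under finite intersections, whence $\bigcap\mathcal{F}=\bigcap_{F\in\mathcal{F}}(F\cap K)\neq\emptyset$ by compactness. With that one-line repair the argument is complete and, as you rightly emphasize, the only genuinely delicate point is exactly where regularity must substitute for the missing continuity from above of the monotone measure.
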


\begin{proposition} 
Suppose $m:\mathcal{B}\to [0,+\infty)$ is finitely purely atomic,
regular, null-additive and monotone. 
Then there exists a finite
family $\{A_{i}\}_{i=1}^{n}\subset \mathcal{A}$ of pairwise disjoint atoms
of $m$ so that $T = \bigcup_{i=1}^{n} A_{i}$. 
\end{proposition}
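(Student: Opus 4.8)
The plan is to start from the finite disjoint family of atoms that the very definition of finitely purely atomic (Definition \ref{d-atom}) already supplies, and then to upgrade each atom to a point mass by invoking the regularity result Theorem \ref{cor-at}. Concretely, since $m$ is finitely purely atomic there already exist pairwise disjoint atoms $A_1,\dots,A_n\in\mathcal{B}$ of $m$ with $T=\bigcup_{i=1}^n A_i$, which gives the bare existence asserted in the statement. The real substance of the proposition, and the reason the hypotheses of regularity and null-additivity are imposed, is to realise this decomposition concretely through single points, so I would aim to produce distinct points $a_1,\dots,a_n\in T$ for which each $\{a_i\}$ is an atom and the complement $T\setminus\{a_1,\dots,a_n\}$ is $m$-null.

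First I would apply Theorem \ref{cor-at} to each atom $A_i$ separately: because $m$ is regular, null-additive and monotone on the Borel sets, every atom $A_i$ contains a unique point $a_i$ with $m(A_i)=m(\{a_i\})$ and $m(A_i\setminus\{a_i\})=0$. Since the $A_i$ are pairwise disjoint the points $a_1,\dots,a_n$ are distinct, and each singleton $\{a_i\}$ has $m(\{a_i\})=m(A_i)>0$; hence by Lemma \ref{1di3}.1) each $\{a_i\}$ is itself an atom of $m$.

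Next I would control the leftover set $N:=T\setminus\{a_1,\dots,a_n\}=\bigcup_{i=1}^n(A_i\setminus\{a_i\})$. Each piece $A_i\setminus\{a_i\}$ satisfies $m(A_i\setminus\{a_i\})=0$, and a finite induction on the null-additivity of $m$ (Definition \ref{def1}) collapses the union: adjoining a set of measure zero never changes the value of $m$, so writing $N_k=\bigcup_{i\le k}(A_i\setminus\{a_i\})$ one gets $m(N_{k+1})=m(N_k)=0$ and therefore $m(N)=0$; by monotonicity every measurable subset of $N$ is then null as well. To phrase the conclusion as a genuine cover of $T$ by atoms I would absorb this null remainder into one singleton, setting $B_1:=\{a_1\}\cup N$ and $B_i:=\{a_i\}$ for $i\ge 2$, so that $\{B_i\}_{i=1}^n$ is a pairwise disjoint family with $\bigcup_{i=1}^n B_i=T$.

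The only point requiring genuine verification, and the step I expect to be the main obstacle, is that $B_1=\{a_1\}\cup N$ is still an atom. Here I would argue directly: for any measurable $C\subseteq B_1$, if $a_1\notin C$ then $C\subseteq N$ forces $m(C)=0$, while if $a_1\in C$ then $B_1\setminus C\subseteq N$ forces $m(B_1\setminus C)=0$; combined with $m(B_1)=m(\{a_1\})>0$ (again by null-additivity, since $m(N)=0$) this is precisely the atom condition. This yields the desired finite family of pairwise disjoint atoms covering $T$, now each concentrated at a single point, and completes the proof.
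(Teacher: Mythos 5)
Your proof is correct, and its core follows the same route as the paper's: apply Theorem \ref{cor-at} to each atom $A_i$ of the decomposition supplied by Definition \ref{d-atom} to extract the points $a_i$, then show the leftover set $N=\bigcup_{i=1}^n (A_i\setminus\{a_i\})$ is $m$-null. The differences, however, are worth recording, and two of them improve on the paper. First, the paper bounds $m(T\setminus\{a_1,\ldots,a_n\})$ by the sum $m(A_1\setminus\{a_1\})+\cdots+m(A_n\setminus\{a_n\})$, which is a subadditivity estimate not granted by the stated hypotheses (only monotonicity, null-additivity and regularity are assumed); your finite induction using null-additivity alone --- $m(N_{k+1})=m(N_k)=0$ since each adjoined piece is null --- is exactly the right repair and makes the null-union step legitimate under the actual assumptions. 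Second, you rightly note that the statement as literally written is a tautology (it merely restates the definition of finitely purely atomic), and you prove the substantive content in a stronger form: the paper stops at $m(T\setminus\{a_1,\ldots,a_n\})=0$ and $m(T)=m(\{a_1,\ldots,a_n\})$, i.e.\ concentration of $m$ on finitely many points, whereas you repackage this as a genuine disjoint cover of $T$ by atoms, absorbing the null remainder into $B_1=\{a_1\}\cup N$; your direct check that $B_1$ is an atom (any measurable $C\subseteq B_1$ either misses $a_1$, hence lies in $N$ and is null by monotonicity, or contains $a_1$, so $B_1\setminus C\subseteq N$ is null, while $m(B_1)=m(\{a_1\})>0$ by null-additivity) is sound, as is the appeal to Lemma \ref{1di3}.1) for the singletons. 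In short: same key lemma and same skeleton as the paper, but your version is more careful on the null-union step and actually delivers the literal conclusion of the proposition rather than only the point-concentration statement that the paper's proof establishes.
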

\begin{proof}
By Theorem \ref{cor-at}, there
are unique $a_{1}, a_{2}, \ldots, a_{n}\in T$ such that $a_{i}\in A_{i}$ and
$m(A_{i} \setminus \{a_{i}\})=0,$ for every $i\in \{1,\ldots,n\}.$ Then we
have
\begin{equation*}
0\leq m(T \setminus \{a_{1}, \ldots, a_{n}\}) \leq m(A_{1} \setminus
\{a_{1}\}) +\ldots + m(A_{n} \setminus \{a_{n}\}) = 0,
\end{equation*}
which implies $m(T \setminus \{a_1, \ldots, a_n\})= 0.$ Now, since $m$
is null-additive it follows $m(T) = m(\{a_1,\ldots,a_n \})$. 
\end{proof}

 We finish this section by presenting a property of total
measurability on atoms. Let $\mathcal{A}$ be an algebra of subsets of $T, m:\mathcal{A} \rightarrow
[0,+\infty )$ a non-negative set function with $m(\emptyset )=0$ and $(X,\| \cdot \|)$ a Banach space.

\begin{definition}\label{3.1} \rm
 A \textit{partition} of $T$ is a finite family of nonvoid sets 
$P=\{A_i \}_{i\leq n} \subset \mathcal{A}$ such that 
$A_i \cap A_j = \emptyset , i\neq j$ and
 $\cup_{i \leq n} A_i=T.$ 
 Let $P=\{A_i \}_{i \leq n}$ and $P^{\prime}=\{B_j \}_{j \leq m}$
be two partitions of $T$. 
$P^{\prime} $ is said to be \textit{finer than} $P$, denoted by $P\leq P^{\prime}$
 (or, $P^{\prime}\geq P$), if for every $j \in\{1,\ldots, m \}$, 
there exists $i_j \in \{1,\ldots,n \}$ so that $B_j \subseteq A_{i_j}$.
\\ The \textit{common refinement} of two partitions $
P=\{A_i \}_{i \leq n}$ and $P^{\prime}=\{B_j \}_{j \leq m}$ is the
partition 
$P\vee P^{\prime}=\{A_i \cap B_j \}_{ i\in\{1,\ldots,n\},  j\in\{1,\ldots,m\}}$.
\end{definition}

We denote by $\mathcal{P}$ the class of all partitions of $T$ and if $A\in\mathcal{P}(T)$
 is fixed, by $\mathcal{P}_{A}$ the class of all partitions of $A$.

\begin{definition}\rm 
 A vector function $f:T\rightarrow X$ is said to be: 
\begin{itemize}
\item
 \textit{$m$-totally-measurable} (\textit{on} $T)$ if for every $
\varepsilon >0$ there exists a finite family $\{A_{i}\}_{i=0}^{n}\subset
\mathcal{A}$ of pairwise disjoint sets, with $\{A_{1},\ldots ,A_{n}\}\subset
\mathcal{A}\setminus \{\emptyset \}$, such that the following two
conditions hold:
\begin{equation*}
\begin{cases}
\overline{m}(A_{0})<\varepsilon \;\text{ and} \\
\sup\limits_{t,s\in A_{i}}\|f(t)-f(s)\|=\mbox{osc}(f,A_{i})<\varepsilon ,\;
\text{ for every }i\in \{1,\ldots ,n\}.
\end{cases}
\leqno{(*)}
\end{equation*}
\item 
 ${m}$-\textit{totally-measurable on $B\in \mathcal{A}$} if the
restriction $f|_{B}$ of $f$ to $B$ is $m$-totally measurable on 
$(B,\mathcal{A}_{B},m_{B})$, where $\mathcal{A}_{B}=\{A\cap B; A\in \mathcal{A}\}$ and 
$m_{B}=m|_{\mathcal{A}_{B}}$.
\end{itemize}
\end{definition}

\begin{example} \rm
Every simple function $f:T\rightarrow X, 
f=\sum\limits_{i=1}^{n} a_i  1_{A_i}$
(where $\{A_i \}_{i=1}^{n}$ is a partition of $T$ and $1_{A_i}$ is the
characteristic function of $A_i$), 
 is $m$-totally-measurable. 
\end{example}

\begin{remark}\label{rem-mstella} \rm \mbox{}
\begin{itemize}
\item[\ref{rem-mstella}.1)] In the condition ($\ast $) of  
\cite[Definition 4.2]{g1965} for a
totally-measurable function $f:T\rightarrow \mathbb{R}$, instead
of $\overline{m}$ it is used $m^{\ast }$. For a vector measure 
$m:\mathcal{A} \rightarrow X$, $m^{\ast }$ is defined by:
\begin{equation*}
m^{\ast }(E)=\sup \{\Vert m(A)\Vert ;A\in \mathcal{A},A\subseteq E\},\forall
E\in \mathcal{P}(T).
\end{equation*}
We remark that if $m:\mathcal{A}\rightarrow \lbrack 0,+\infty )$ is a
non-negative set function, then
\begin{equation*}
m^{\ast }(E)=\sup \{m(A);A\in \mathcal{A},A\subseteq E\},\forall E\in
\mathcal{P}(T).
\end{equation*}
Thus, if $A\in \mathcal{A}$, then $m(A)\leq \overline{m}(A)$, which implies
that $m^{\ast }(E)\leq \overline{m}(E)$, for every $E\in \mathcal{P}(T)$. In
consequence, if $f$ is $m$-totally-measurable according to our Definition
3.2, then $f$ is also totally-measurable according to 
 \cite[Definition 4.2 ]{g1965} (which we call $m^{\ast }$-totally-measurable). 
\\
We also observe that if $m$ is finitely additive (a subadditive
monotone measure respectively), then, according to Remark \ref{rem-bar}.vii), we have $
\overline{m}(E) = m^{*}(E)$, for every $E\in \mathcal{P}(T)$ ($E\in\mathcal{A}$ respectively). 
So, in these cases, the two definitions coincide. 
\item[\ref{rem-mstella}.2)]  If $f:T\rightarrow X$ is $m$-totally-measurable on $T$, then $f$
is $m$-totally-measurable on every $A\in \mathcal{A}$. 
\item[\ref{rem-mstella}.3)]  If $m$ is null-additive and monotone, and $A\in \mathcal{A}$ is an atom
 for $m$, then $m_A$ is finitely additive, 
where $m_A$ is the restriction of $m$ to $A\cap \mathcal{A}$, and so $m_A=\overline{m}_A$. 
(This allows to avoid the request that $m$ is of finite variation in the Theorem \ref{ex4.10} and subsequent Corollary).
\item[\ref{rem-mstella}.4)]  If $m$ is null-additive and monotone, and $A\subset T$ is an atom for $m$, then a function $f:T\to X$ is totally
 measurable on $A$ if and only if
$$\inf_{U\in \mathcal{U}}\mbox{osc}(f,U)=0,$$
where $\mathcal{U}$ is the family of all atoms contained in $A$.
\item[\ref{rem-mstella}.5)] 
 Suppose $m$ is subadditive and let  $\{A_{i}\}_{i=1}^{p}
\subset \mathcal{A}$. If $f:T\rightarrow X$ is $m$-totally-measurable on
every $A_{i},i\in \{1,\ldots ,p\}$, then $f$ is $m$-totally-measurable on $
\bigcup\limits_{i=1}^{p}A_{i}$. 
\end{itemize}
\end{remark}

\begin{theorem}\label{th-tm}
 Suppose  $f:T\rightarrow X$ is a vector function 
 and $m:\mathcal{B}\rightarrow \lbrack 0,+\infty )$ is a regular
null-additive monotone measure. If $A\in \mathcal{B}$ is an atom of $m$,
then $f$  is $m$-totally-measurable on  $A$.
\end{theorem}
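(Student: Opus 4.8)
The plan is to exploit the fact that, under the stated hypotheses, an atom $A$ collapses to a single point modulo an $\overline{m}$-null set, so that the restriction of $f$ to $A$ is constant off a negligible set and is therefore trivially totally-measurable. Recall that total-measurability on $A$ means total-measurability of $f|_A$ on the trace space $(A,\mathcal{B}_A,m_A)$.

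First I would invoke Theorem \ref{cor-at}: since $m$ is regular, null-additive and monotone and $A\in\mathcal{B}$ is an atom, there is a (unique) point $a\in A$ with $m(A)=m(\{a\})$ and, crucially, $m(A\setminus\{a\})=0$. Because $m$ is monotone, Remark \ref{rem-bar}.iii) upgrades this to $\overline{m}(A\setminus\{a\})=0$; and since $A\in\mathcal{B}$, the trace variation $\overline{m_A}$ agrees with $\overline{m}$ on subsets of $A$, so $\overline{m_A}(A\setminus\{a\})=0$ as well.

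Next, fix $\varepsilon>0$ and take the two-element family $A_0=A\setminus\{a\}$ and $A_1=\{a\}$. Both sets lie in $\mathcal{B}_A$: the singleton $\{a\}$ is compact in the Hausdorff space $T$, hence $\{a\}\in\mathcal{K}\subseteq\mathcal{B}$, while $A_0=A\cap\{a\}^{c}\in\mathcal{B}$ with $A_0\subseteq A$. They are disjoint, cover $A$, and $A_1\neq\emptyset$. The first requirement of condition $(\ast)$ holds because $\overline{m_A}(A_0)=0<\varepsilon$, and the second holds because $\mbox{osc}(f,A_1)=\|f(a)-f(a)\|=0<\varepsilon$. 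Since $\varepsilon$ was arbitrary (indeed the very same partition works for every $\varepsilon$), $f$ is $m$-totally-measurable on $A$.

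There is essentially no hard step: the entire content is absorbed by Theorem \ref{cor-at}, which reduces the atom to a single point up to a null set, after which the exhibited two-set partition settles condition $(\ast)$ at once. The only points deserving a moment's care are the two measurability checks I flagged above, namely that $\{a\}$ is Borel (immediate from compactness of singletons in a Hausdorff space) and that passing from $m$ to its trace $m_A$ leaves the variation on subsets of $A$ unchanged (immediate because $A\in\mathcal{B}$).
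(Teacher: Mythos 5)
Your proof is correct and follows essentially the same route as the paper's own argument: both invoke Theorem \ref{cor-at} to produce the point $a\in A$ with $m(A\setminus\{a\})=0$ and then verify condition $(\ast)$ using the two-set partition $\{A\setminus\{a\},\{a\}\}$. Your extra checks---that monotonicity upgrades $m(A\setminus\{a\})=0$ to $\overline{m}(A\setminus\{a\})=0$ via Remark \ref{rem-bar}.iii), and that $\{a\}$ is Borel since singletons are compact in a Hausdorff space---simply make explicit details the paper leaves implicit.
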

\begin{proof} By Theorem \ref{cor-at}, there exists a unique point $
a\in A$ so that $m(A\setminus \{a\})=0$.
We observe that the partition $P_{A}=\{A\setminus \{a\},\{a\}\}$ assures
the ${m}$-total measurability of $f$ on  $A$.
\end{proof}

 Observe that an analogous result was given in \cite[Theorem 4.9]{gcmg2009} for compact metric spaces.
By  \ref{rem-mstella}.3) and Theorem \ref{th-tm}, we immediately get:

\begin{corollary}
If $f:T\rightarrow X$ is a vector function and $m:\mathcal{B}\rightarrow
\lbrack 0,+\infty )$ is a finitely purely atomic regular subadditive
monotone measure, then $f$ is ${m}$-totally-measurable on $T$.
\end{corollary}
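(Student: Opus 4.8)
The plan is to reduce the assertion to the single-atom case already settled in Theorem \ref{th-tm}, and then to glue the finitely many atoms together using the subadditivity-based stability recorded in Remark \ref{rem-mstella}.5). The first move is to observe that the stated hypotheses already supply everything Theorem \ref{th-tm} requires, once we exploit an implication among them: since $m$ is monotone and subadditive, Remark \ref{rem1} guarantees that $m$ is null-additive. Hence $m$ is a regular, null-additive, monotone measure on $\mathcal{B}$, which is exactly the hypothesis of Theorem \ref{th-tm}.

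Next, because $m$ is finitely purely atomic, Definition \ref{d-atom} furnishes a finite disjoint family $\{A_i\}_{i=1}^{n}\subset\mathcal{B}$ of atoms of $m$ with $T=\bigcup_{i=1}^{n}A_i$; note in particular that each $A_i$ lies in the Borel $\sigma$-algebra $\mathcal{B}$, so the Borel-atom hypothesis of Theorem \ref{th-tm} is met. Applying Theorem \ref{th-tm} to each $A_i$ separately then yields that $f$ is $m$-totally-measurable on every $A_i$, for $i\in\{1,\ldots,n\}$.

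Finally, I would invoke Remark \ref{rem-mstella}.5), whose hypothesis is precisely that $m$ be subadditive and that $f$ be $m$-totally-measurable on each member of a finite family of sets in $\mathcal{A}$. Both conditions are in hand, so the remark delivers $m$-total measurability of $f$ on $\bigcup_{i=1}^{n}A_i=T$, which is the claim.

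Since every ingredient is already available, there is no substantial obstacle; the only point deserving care is that subadditivity enters the argument twice and for two different reasons — once (through Remark \ref{rem1}) to produce the null-additivity needed before Theorem \ref{th-tm} can be applied atom by atom, and once directly to license the finite-union passage of Remark \ref{rem-mstella}.5). Keeping these two uses distinct is what makes the short chain of implications go through.
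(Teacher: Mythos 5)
Your proof is correct and takes essentially the same route as the paper: total measurability on each atom via Theorem \ref{th-tm}, then gluing the finitely many atoms using subadditivity. The only difference is one of citation bookkeeping — the paper derives the corollary ``by \ref{rem-mstella}.3) and Theorem \ref{th-tm}'', whereas you spell out the two distinct uses of subadditivity explicitly, via Remark \ref{rem1} (to get the null-additivity that Theorem \ref{th-tm} requires) and via Remark \ref{rem-mstella}.5) (to pass to the finite union), which if anything makes the short argument more transparent.
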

%==============================================
\section{Gould integrability on atoms}

In this section some properties regarding Gould integrability on finitely
purely atomic monotone measure spaces are established: a Lebesgue type
theorem of convergence and comparative results between Gould integrability
and total measurability.
The Gould integral was defined in \cite{g1965} for real functions with respect to a
finitely additive vector measure taking values in a Banach space. Different
generalizations and topics on Gould integrability were introduced and
studied in \cite{cg2015,g2006,g2008,gc2009,gcmg2009,gic2015,pc2003,pc2003a,pgc2010,s2005}.
In what follows, $m:\mathcal{A}\rightarrow \lbrack 0,\infty )$ is a set
function with $m(T)>0$. For an arbitrary {vector} function $f:T\rightarrow X$,
 $\sigma _{f,m}(P)$ (or, if there is no doubt, $\sigma _{f}(P),\sigma
_{m}(P)$ or $\sigma (P)$) denotes the sum $\sum_{i=1}^{n} f(t_{i})m(A_{i}),$ for every partition of $T$, 
$P=\{A_{i}\}_{i=1}^{n}$
 and every $t_{i}\in A_{i},i\in \{1,\ldots ,n\}$.

\begin{definition}\label{g-int} \rm
 A vector function $f:T\rightarrow X$ is said to be (\textit{Gould}
) $m$-\textit{integrable} (on $T)$ if the net $(\sigma (P))_{P\in (
\mathcal{P},\leq )}$ is convergent in  $X$, where $\mathcal{P}$ is
ordered by the relation $"\leq "$ given in Definition \ref{3.1}.
\\
If $(\sigma (P))_{P\in (\mathcal{P},\leq )}$ is convergent, then its
limit is called \textit{the Gould integral of }$f$\textit{\ on }$T$\textit{\
with respect to }$m$, denoted by $(G) \int_{T}fdm$ (shortly $\int_{T}fdm$).
\\ If $B\in \mathcal{A},f$ is said to be $m$\textit{-integrable on }
$B$ if the restriction $f|_{B}$ of $f$ to $B$ is $m$-integrable on 
$(B,\mathcal{A}_{B},m_{B})$.
\end{definition}

\begin{remark}\label{ex4.2} \rm
By Definition \ref{g-int}, the following statements hold:
\begin{itemize}
\item $f$ is $m$-integrable on $T$ if and only if there exists
$\alpha \in X$ such that for every $\varepsilon >0$, there exists a
partition $P_{\varepsilon }$ of $T$, so that for every other partition of $T$
, $P=\{A_{i}\}_{i=1}^{n},$ with $P\geq P_{\varepsilon }$ and every choice of
points $t_{i}\in A_{i},i\in \{1,\ldots ,n\}$, we have $\| \sigma(P)-\alpha \|<\varepsilon $.
\item Let $B,C\in \mathcal{A}$ satisfy
 $B\cap C=\emptyset$. 
If $f:T\rightarrow X$ is $m$-integrable on $B$ and $C$, then $f$ is $
m$-integrable on $B\cup C$ and $\int_{B\cup C} fdm=\int_{B}fdm+\int_{C}fdm.$
\end{itemize}
\end{remark}

\begin{example} \rm 
\begin{itemize}
\item Let $T$ be a finite set, $\mathcal{A}=\mathcal{P}(T)$, $m:\mathcal{A}\to [0,+\infty)$ 
and $f:T\to \mathbb{R}$ be arbitrary. Then $f$
is Gould $m$-integrable and $\int_{T}fdm = \sum\limits_{t\in T}f(t)m(\{t\})$.
\item If $m:\mathcal{A}\to [0,+\infty)$ is finitely additive and $
f:T\to \mathbb{R}$ is simple, $f= \sum\limits_{i=1}^{n}a_{i}\cdot 1_{A_{i}}$, 
then $f$ is Gould $m$-integrable and $\int_{T}fdm =
\sum\limits_{i=1}^{n}a_{i}\cdot m (A_{i})$. 
\item Let $T = \mathbb{N}$, $p\in T$ be fixed, $\mathcal{A}$ and $m$ defined as in 
Example \ref{es1}.ii) and let $f:\mathbb{N}\to \mathbb{R}$ be
defined for every $x\in T$ by
\begin{equation*}
f(x) =
\begin{cases}
x, & x\in \{0,\ldots,p\} \\
0, & x\geq p+1
\end{cases}
\end{equation*}
As we remarked in Example 
\ref{es1}.ii), $T$ is an atom of $m$. Then $f$ is Gould $m $-integrable on $T$ and $\int_{T}fdm = 0.$ 
\end{itemize}
\end{example}

\begin{theorem}\label{4.4}
Suppose $m:\mathcal{B}\rightarrow \lbrack 0,+\infty )$ is a regular
null-additive monotone measure and let $f:T\rightarrow X$ be any vector function. For every atom $A\in \mathcal{B}$, 
$f$ is $m$-integrable on $A$, and
$\int_{A}fdm=f(a)m(A)$, where $a\in A$ is the single point resulting by
Theorem \ref{cor-at}.
\end{theorem}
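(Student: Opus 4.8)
The plan is to guess the value of the integral outright and then verify the convergence criterion of Remark \ref{ex4.2} directly, rather than estimating Riemann sums. Since $A$ is an atom of the regular null-additive monotone measure $m$, Theorem \ref{cor-at} supplies a unique point $a\in A$ with $m(A\setminus\{a\})=0$ and $m(\{a\})=m(A)$. I claim the integral equals $f(a)m(A)$, and the candidate for the decisive partition $P_\varepsilon$ (which here may be chosen independent of $\varepsilon$) is $P_0=\{A\setminus\{a\},\{a\}\}$, the very partition already used in the proof of Theorem \ref{th-tm}. If $A=\{a\}$, then the only partition of $A$ is $\{A\}$ itself and $\sigma(P)=f(a)m(A)$ trivially, so I may assume $A\setminus\{a\}\neq\emptyset$; then $P_0$ is a genuine partition of $A$ in the trace $\sigma$-algebra $\mathcal{B}_A$ (note that $\{a\}$ is compact, hence Borel).

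Next I would take an arbitrary partition $P=\{B_1,\ldots,B_n\}\in\mathcal{P}_A$ with $P\geq P_0$ and examine the sum $\sigma(P)=\sum_{i=1}^n f(t_i)m(B_i)$. By the refinement condition in Definition \ref{3.1}, each block $B_i$ is contained in either $\{a\}$ or $A\setminus\{a\}$; since the blocks are nonvoid, exactly one of them equals $\{a\}$ and all the others are subsets of $A\setminus\{a\}$. For the block equal to $\{a\}$ the tag is forced to be $a$ and $m(\{a\})=m(A)$, while for every other block $B_i\subseteq A\setminus\{a\}$ monotonicity gives $0\leq m(B_i)\leq m(A\setminus\{a\})=0$, so those terms vanish. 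Consequently $\sigma(P)=f(a)m(A)$ exactly, for every $P\geq P_0$ and every choice of tags $t_i$.

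Finally I would conclude that the criterion of Remark \ref{ex4.2} holds with $\alpha=f(a)m(A)$ and $P_\varepsilon=P_0$: the net $(\sigma(P))_{P\in(\mathcal{P}_A,\leq)}$ is eventually constant, equal to $f(a)m(A)$, hence convergent to that value. By Definition \ref{g-int} this means $f$ is (Gould) $m$-integrable on $A$ with $\int_A f\,dm=f(a)m(A)$.

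The only feature that looks as though it might obstruct the argument is that $f$ is a completely arbitrary vector function, with no measurability or continuity assumed, so one might fear that the freedom in choosing the tags $t_i$ lets $\sigma(P)$ wander. The resolution, and the real crux of the proof, is that isolating the single mass point $a$ into its own block concentrates all the measure on $\{a\}$ and annihilates every other term, so the oscillation of $f$ away from $a$ is simply never weighted. This is precisely why no hypothesis on $f$ is needed beyond its being defined on $A$.
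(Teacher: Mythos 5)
Your proof is correct and follows essentially the same route as the paper: both isolate the mass point via the partition $P_0=\{\{a\},A\setminus\{a\}\}$ from Theorem \ref{cor-at} and observe that every finer partition yields the constant sum $\sigma(P)=f(a)m(\{a\})=f(a)m(A)$, since monotonicity kills all blocks inside $A\setminus\{a\}$. Your added care (the degenerate case $A=\{a\}$ and the explicit check that exactly one block equals $\{a\}$) only makes explicit what the paper leaves implicit.
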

\begin{proof} Fix any partition $P$ of $A$, finer than $P_0:=\{\{a\}, A\setminus \{a\}\}$. 
Then $P$ is of the type $P=\{\{a\}, B_1,...,B_n\}$, where $m(B_i)=0$ for all $i=1,...,n$ 
(since $m$ is monotone and $m(A\setminus \{a\})=0$). So
$\sigma_f(P)=f(a)m(\{a\})=f(a)m(A)$ thanks to  Remark
\ref{rem3}. 
Since this quantity is constant for every $P$ finer than $P_0$, this is the announced integral.
\end{proof} 

\medskip
More generally, with a quite similar proof, one has
\begin{corollary}
Suppose $m:\mathcal{B}\rightarrow \lbrack 0,+\infty )$ is a finitely purely
atomic regular null-additive monotone measure, with $T=\bigcup%
\limits_{i=1}^{n}A_{i}$, and $\{A_{i}\}_{i=1}^{n}\subset \mathcal{A}$ are
pairwise disjoint atoms of $m$. Then any vector function $f:T\rightarrow X$ is
$m$-integrable on $T$, and $\int_{T}fdm
=\sum\limits_{i=1}^{n}f(a_{i})m(A_{i})$, where $a_{i}\in A_{i}$ is the
single point resulting by Theorem \ref{cor-at}, for every $i\in \{1,\ldots ,n\}.$
\end{corollary}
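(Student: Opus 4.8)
The plan is to follow the strategy of Theorem \ref{4.4}, exhibiting a base partition of $T$ beyond which the Riemann-type sums $\sigma_f(P)$ become constant, so that the net converges trivially. First I would apply Theorem \ref{cor-at} separately to each atom $A_i$ to obtain, for every $i\in\{1,\ldots,n\}$, the unique point $a_i\in A_i$ satisfying $m(A_i)=m(\{a_i\})$ and $m(A_i\setminus\{a_i\})=0$.

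Next, I would introduce the partition
\[P_0=\{\{a_1\},\,A_1\setminus\{a_1\},\,\ldots,\,\{a_n\},\,A_n\setminus\{a_n\}\}\]
of $T$ (discarding any empty blocks $A_i\setminus\{a_i\}$), which is legitimate because the $A_i$ are pairwise disjoint and cover $T$. The key observation is that any partition $P$ of $T$ finer than $P_0$ has a very rigid shape: each block of $P$ is either one of the singletons $\{a_i\}$ --- which cannot be split further --- or else is contained in some $A_j\setminus\{a_j\}$ and therefore has $m$-measure $0$ by monotonicity. Consequently, for any choice of tags,
\[\sigma_f(P)=\sum_{i=1}^n f(a_i)\,m(\{a_i\})=\sum_{i=1}^n f(a_i)\,m(A_i),\]
the last equality using $m(\{a_i\})=m(A_i)$ from Theorem \ref{cor-at}.

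Since this value depends neither on $P$ (as long as $P\geq P_0$) nor on the tags, the net $(\sigma_f(P))_{P\in(\mathcal{P},\leq)}$ is eventually constant and hence convergent to $\sum_{i=1}^n f(a_i)m(A_i)$, which by definition is the Gould integral. I do not expect a genuine obstacle here, the argument being essentially that of Theorem \ref{4.4} transported over a finite disjoint family of atoms; the only points requiring a line of care are verifying that $P_0$ is a partition into nonvoid $\mathcal{A}$-sets and that refining $P_0$ cannot dissolve the singletons $\{a_i\}$.

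Alternatively, one could bypass the explicit partition entirely. Theorem \ref{4.4} already yields $m$-integrability of $f$ on each atom $A_i$ together with $\int_{A_i}fdm=f(a_i)m(A_i)$, and then a straightforward induction on $n$, using the finite additivity of the Gould integral over disjoint sets (second item of Remark \ref{ex4.2}), delivers both integrability on $T=\bigcup_{i=1}^n A_i$ and the formula $\int_T fdm=\sum_{i=1}^n f(a_i)m(A_i)$. I would likely present the direct argument, as it matches the ``quite similar proof'' indicated above, and mention the additive route as a shortcut.
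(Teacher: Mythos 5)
Your proof is correct and follows essentially the paper's own approach: the paper gives no separate argument for this corollary, stating only that it holds ``with a quite similar proof'' to Theorem \ref{4.4}, and your base partition $P_0=\{\{a_1\},A_1\setminus\{a_1\},\ldots,\{a_n\},A_n\setminus\{a_n\}\}$, beyond which every tagged sum equals $\sum_{i=1}^n f(a_i)m(A_i)$ (singleton blocks forced to survive refinement, all other blocks $m$-null by monotonicity, and $m(\{a_i\})=m(A_i)$ by Theorem \ref{cor-at}), is exactly the multi-atom version of the partition used there. Your alternative route via Theorem \ref{4.4} on each $A_i$ plus the finite additivity of the integral over disjoint sets (Remark \ref{ex4.2}) is also valid and equally consistent with the paper's tools.
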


Observe also that, when $X= \mathbb{R}^+$ and the measure $m$ is also null continuous
 ($m(\cup_n A_n) = 0$ for every increasing sequence $(A_n)_n \in \mathcal{B}$,
with $m(A_n) = 0$ for every $n$), the Gould integral of a non negative function $f$
 is equal to its Choquet integral thanks to \cite[Corollary 4.9]{lmp}.
This result can also be extended in the following way:
\begin{proposition}
Let $f:T \to \mathbb{R}^+$ be a measurable, bounded function and $m: \mathcal{A} \to [0, \infty[$
 be a monotone, null additive, atomic measure.
If $A \in \mathcal{A}$ is an atom then
\[ (G) \int_A f dm = (C) \int_A f dm.\]
\end{proposition}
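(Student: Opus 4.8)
The plan is to compute both integrals explicitly and to show that each equals $c\,m(A)$ for a suitably chosen threshold value $c$. The crucial observation is that, because $A$ is an atom, the distribution function $\varphi(\alpha):=m(A\cap\{f>\alpha\})$ can take only the two values $0$ and $m(A)$. Indeed, for a fixed $\alpha$ the set $B=A\cap\{f>\alpha\}$ is contained in $A$, so by Lemma \ref{1di3}.1) either $m(B)=0$, or $B$ is itself an atom of $m$ with $m(B)=m(A)$ (this last equality coming from null-additivity). Since $\varphi$ is non-increasing (by monotonicity of $m$) and vanishes beyond any bound of $f$, there is a threshold
$$c:=\inf\{\alpha\ge 0 : m(A\cap\{f>\alpha\})=0\},$$
with $\varphi(\alpha)=m(A)$ for $\alpha<c$ and $\varphi(\alpha)=0$ for $\alpha>c$. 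Hence the Choquet integral would be computed as
$$(C)\int_A f\,dm=\int_0^{\infty}m(A\cap\{f>\alpha\})\,d\alpha=\int_0^{c}m(A)\,d\alpha=c\,m(A).$$

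For the Gould integral I would first record, via Lemma \ref{1di3}.2), that for any partition $P=\{B_i\}_{i=1}^{k}$ of $A$ there is exactly one index $i_0$ with $m(B_{i_0})=m(A)$, all remaining pieces being $m$-null. Consequently every Riemann-type sum collapses to $\sigma_f(P)=f(t_{i_0})\,m(A)$, where $t_{i_0}\in B_{i_0}$ is the tag in the unique full-measure piece. Thus the $m$-integrability of $f$ on $A$ reduces to controlling the single sampled value $f(t_{i_0})$ along finer and finer partitions.

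To produce this control, I would fix $\varepsilon>0$ and set $G:=A\cap\{c-\varepsilon<f\le c+\varepsilon\}$ (reading $c-\varepsilon$ as $0$ when $c=0$, where $f\ge 0$ keeps $G$ of full measure). Since $m(A\cap\{f>c-\varepsilon\})=m(A)$ while $m(A\cap\{f>c+\varepsilon\})=0$, and these two sets differ exactly by $G$, null-additivity gives $m(G)=m(A)$; hence $m(A\setminus G)=0$ by Lemma \ref{1di3}.2). Taking $P_\varepsilon:=\{G,A\setminus G\}$, for any partition $P\ge P_\varepsilon$ every piece contained in $A\setminus G$ is $m$-null, so the unique full-measure piece $B_{i_0}$ must lie inside $G$; therefore $t_{i_0}\in G$ and $|f(t_{i_0})-c|\le\varepsilon$. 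This yields $|\sigma_f(P)-c\,m(A)|=|f(t_{i_0})-c|\,m(A)\le\varepsilon\,m(A)$, so the net $(\sigma_f(P))_{P}$ converges to $c\,m(A)$, i.e. $(G)\int_A f\,dm=c\,m(A)$, matching the Choquet value.

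The steps requiring care are the identification of the threshold $c$ (showing that $\varphi$ is genuinely a two-valued step function, which rests on applying Lemma \ref{1di3}.1) to each level set) and the verification $m(G)=m(A)$ through null-additivity. The main obstacle, however, is checking that the full-measure piece of every refinement of $P_\varepsilon$ is trapped inside $G$: this is precisely what forces the sampled values $f(t_{i_0})$ into $(c-\varepsilon,c+\varepsilon]$ and thereby delivers convergence of the Gould net to the same constant. Measurability of $f$ is used only to guarantee that the level sets $\{f>\alpha\}$ and the set $G$ belong to $\mathcal{A}$, while boundedness ensures $c<\infty$ and the finiteness of both integrals.
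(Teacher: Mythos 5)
Your proof is correct and takes essentially the same route as the paper's: your threshold $c$ coincides with the paper's $t_0=\sup\{t\ge 0: m(\{f>t\})=m(A)\}$, the Choquet integral is computed identically as $c\,m(A)$, and your Gould argument---trapping the tag of the unique full-measure piece of any refinement of $P_\varepsilon=\{G,A\setminus G\}$ inside the band $G=A\cap\{c-\varepsilon<f\le c+\varepsilon\}$---is exactly the paper's step with the join $\Pi_1\vee\Pi_2$ of the cut partitions at $t_1<t_0<t_2$. The only differences are presentational: you make explicit, via Lemma \ref{1di3}, the two-valuedness of $\alpha\mapsto m(A\cap\{f>\alpha\})$ and the edge case $c=0$, details the paper leaves implicit.
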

\begin{proof}
First of all 
let $t_0= \sup \{ t \geq 0 : m(\{f(x) > t \}) = m(A)$.
Obviously $t_0 < \infty$ since $f$ is bounded. So, $m(\{f > t \}) =m(A)$ for every $t < t_0$ and 
$m(\{f \leq t \}) = m(A)$ for every $t > t_0$.
So it is
 \[ (C) \int_A f dm = \int_0^{\infty} m(\{f > t \}) dt = \int_0^{t_0} m(\{f > t \}) dt = t_0 m(A).\]
We prove now that $f$ is Gould  integrable. Let $\varepsilon > 0$ be fixed and let $t_1 < t_0< t_2$ 
be such that
 $(t_2 - t_1)m(A) \leq \varepsilon$.\\
Let $\Pi_i= \{ \{ x \in A:  f \leq t_i\}, \{x \in A: f > t_i\} \}, i=1,2$, so
\begin{eqnarray*}
m(A) &=& m(\{ x \in A:  f >t_1\}) = m(\{ x \in A:  f \leq t_2\})=
\\ &=& m((\{ x \in A:  f >t_1\} \cap \{ x \in A:  f \leq t_2\}).
\end{eqnarray*}
Let $\Pi$ finer that $\Pi_1 \vee \Pi_2$.
By Lemma \ref{1di3}.2) it is \, 
 $t_1 m(A) \leq \sigma(f, \Pi) = f(\xi) m(A) \leq t_2 m(A)$.
Then
\begin{eqnarray*}
\left|  \sigma(f, \Pi) - t_0 m(A) \right| &\leq& (t_2 - t_1)m(A) \leq \varepsilon
\end{eqnarray*}
and then $f$ is Gould integrable.
\end{proof}

\begin{corollary}
{\rm (Lebesgue type)} Suppose $m:\mathcal{B}\rightarrow \lbrack 0,+\infty )$ is a
regular null-additive monotone measure and let $f$, $f_{n}:T\rightarrow X$
be arbitrary functions. If $A\in \mathcal{B}$ is an atom of $m$, then
$$\lim_{n\rightarrow \infty }\int_{A}f_{n}dm = \int_{A}fdm$$
if and only if
$$\lim_{n\rightarrow \infty} f_{n}(a) = f(a),$$ where $a\in A$ is the
single point resulting from Theorem \ref{cor-at}. 
\end{corollary}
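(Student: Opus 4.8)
The plan is to reduce everything to the explicit formula for the Gould integral on an atom that was already established in Theorem \ref{4.4}. By that theorem, since $m$ is a regular null-additive monotone measure and $A\in\mathcal{B}$ is an atom of $m$, every vector function is $m$-integrable on $A$, and the integral is computed at the distinguished point $a\in A$ furnished by Theorem \ref{cor-at}. Applying this to each $f_n$ and to $f$, I would record the two identities
\[
\int_{A}f_{n}\,dm=f_{n}(a)\,m(A),\qquad \int_{A}f\,dm=f(a)\,m(A),
\]
so that the whole statement becomes a comparison of the scalar-weighted values $f_n(a)m(A)$ with $f(a)m(A)$ in the Banach space $X$.

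Next I would subtract and take norms, using that $m(A)$ is a fixed scalar:
\[
\Bigl\|\int_{A}f_{n}\,dm-\int_{A}f\,dm\Bigr\|
=\bigl\|f_{n}(a)-f(a)\bigr\|\,m(A).
\]
The crucial observation is that $m(A)>0$, which holds by the very definition of an atom (Definition \ref{d-atom}). Hence $m(A)$ is a strictly positive constant, and multiplying or dividing a norm by it does not affect convergence to zero.

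With that in hand both implications are immediate. If $f_n(a)\to f(a)$, then $\|f_n(a)-f(a)\|\to 0$, so the displayed right-hand side tends to $0$ and $\int_A f_n\,dm\to\int_A f\,dm$. Conversely, if $\int_A f_n\,dm\to\int_A f\,dm$, then $\|f_n(a)-f(a)\|\,m(A)\to 0$, and dividing by the positive number $m(A)$ yields $\|f_n(a)-f(a)\|\to 0$, i.e. $f_n(a)\to f(a)$. I do not expect any genuine obstacle here: the only point requiring care is to invoke $m(A)>0$ explicitly before dividing, since without positivity of $m(A)$ the reverse implication would fail. The entire argument is therefore a short chain resting on Theorem \ref{4.4} and the atom condition $m(A)>0$.
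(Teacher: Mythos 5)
Your proof is correct and matches the route the paper intends: the corollary is stated as an immediate consequence of Theorem \ref{4.4}, whose formula $\int_A f\,dm = f(a)\,m(A)$ together with $m(A)>0$ gives exactly your two-line equivalence. Your explicit remark that $m(A)>0$ is needed for the reverse implication is the only nontrivial point, and you handle it properly.
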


In case $m$ is not regular, the results in Theorems \ref{cor-at} and \ref{4.4} are not valid, in general.
In order to give an example, we recall the concept of {\em filter}.
\begin{definition}\label{filter}\rm
Let $Z$ be any fixed  set.  A family  $\mathcal{U}$ of subsets of $Z$
is called  a { \textit{filter}} in $Z$ if and only if
\begin{description}
\item[\ref{filter}.a)] $\emptyset  \not \in {\mathcal U}$,
\item[\ref{filter}.b)] $A \cap B \in {\mathcal F}$ whenever $A,B \in \mathcal{U}$ and
\item[\ref{filter}.c)] $A \in \mathcal{F}, B \supset A \Rightarrow B\in \mathcal{U}.$
\end{description}
A filter $\mathcal{U}$ is an ultrafilter if for every $A \subset Z$ then $A \in  \mathcal{U}$ or $Z \setminus A \in  \mathcal{U}$.
\end{definition}
Given any filter $\mathcal{U}$ of subsets of $Z$, the {\em dual ideal} of $\mathcal{U}$ is the family of all {\em complements} of elements from $\mathcal{U}$. 
Usually the dual ideal will be denoted by $\mathcal{I}_{ \mathcal{U}}$.
If the dual ideal $\mathcal{I}_{ \mathcal{U}}$ contains all finite subsets of $Z$, we say that  $\mathcal{U}$ is a {\em free} filter. 
\\

 Consider now the following example:
\begin{example} \rm
Let $T=[0,1]$ with the $\sigma$-algebra $\mathcal{A}=\mathcal{P}([0,1])$,
 and choose any free ultrafilter $\mathcal{U}$ in $\mathcal{P}([0,1])$. Then define $m(A)=1$ if $A\in \mathcal{U}$, and 0  
otherwise. Clearly, each element of $\mathcal{U}$ is an atom; but, since the ultrafilter is free, for every point
 $a\in [0,1]$ one has $m(\{a\})=0$, and therefore, for any mapping $f$, the quantity $f(a)m(\{a\})$ is always null.
\end{example}

In general, when $m$ is null-additive and monotone, and $A$ is an atom for $m$, integrability of a mapping 
$f:T\to X$ in the set $A$ is strictly related with its total measurability in $A$.\\

Concerning the relationships between Gould integrability and total
measurability, we recall the following:
\begin{itemize}
\item if $m:\mathcal{A}\rightarrow X$ is a finitely additive vector
measure and $f:T\rightarrow \mathbb{R}$ is bounded, then $f$ is Gould $m$
-integrable if and only if $f$ is $m^{\ast }$-totally-measurable (\cite{g1965});
\item if $m:\mathcal{A}\rightarrow \lbrack 0,+\infty )$ is a submeasure
of finite variation and $f:T\rightarrow \mathbb{R}$ is bounded, then $f$ is
Gould $m$-integrable if and only if $f$ is $m$-totally-measurable
( \cite{gp2007}).
\end{itemize}

An interesting result in the present framework is the following.

\begin{theorem}\label{nuovo47}
If\ $m:\mathcal{A}\to [0,+\infty)$ is finitely additive, and $f:T\to X$ is a bounded $m$-totally measurable function, 
then $f$ is Gould-integrable.
\end{theorem}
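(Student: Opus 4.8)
The plan is to verify the Cauchy form of Gould integrability directly and then invoke completeness of $X$. By Remark \ref{ex4.2} it suffices to produce, for each $\varepsilon>0$, a partition $P_\varepsilon$ of $T$ such that $\|\sigma(P')-\sigma(P'')\|<\varepsilon$ for all partitions $P',P''\geq P_\varepsilon$ and all choices of tags. Throughout I would use that, since $m$ is finitely additive, $\overline m(A)=m(A)$ for every $A\in\mathcal A$ (Remark \ref{rem-bar}.vi), so the distinguished set $A_0$ coming from total measurability has genuinely small $m$-mass; and I set $M:=\sup_{t\in T}\|f(t)\|<\infty$ by boundedness (the case $M=0$ being trivial). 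Note that the family furnished by total measurability must be a partition of $T$ for the refinement relation $\geq$ to apply to it, so I treat it as such.

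First, fix $\varepsilon>0$ and a parameter $\eta>0$ to be chosen at the end. Applying $m$-total-measurability with $\eta$, I obtain a partition $P_\eta=\{A_0,A_1,\dots,A_n\}$ of $T$ with $m(A_0)=\overline m(A_0)<\eta$ and $\mbox{osc}(f,A_i)<\eta$ for every $i\in\{1,\dots,n\}$. Fix once and for all points $s_i\in A_i$ $(1\le i\le n)$ and set $c:=\sum_{i=1}^n f(s_i)\,m(A_i)\in X$. The core estimate is the following: for any partition $P\geq P_\eta$ and any tags, each block $B\in P$ lies in exactly one $A_j$ (the $A_j$ being pairwise disjoint), so $P$ splits into the subfamilies $\{B\in P:B\subseteq A_j\}$, each of which is a finite partition of $A_j$. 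Hence by finite additivity $\sum_{B\subseteq A_j}m(B)=m(A_j)$, which lets me write $\sigma(P)-c$ as a sum over $j$ and estimate it termwise.

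Concretely, the $A_0$-part is bounded by $M\sum_{B\subseteq A_0}m(B)=M\,m(A_0)<M\eta$, while for each $i\ge1$ I would use $t_B,s_i\in A_i$ to get $\|f(t_B)-f(s_i)\|\le \mbox{osc}(f,A_i)<\eta$, whence $\big\|\sum_{B\subseteq A_i}(f(t_B)-f(s_i))m(B)\big\|\le \eta\,m(A_i)$. Summing over $i$ and using $\sum_{i=1}^n m(A_i)\le m(T)$ yields $\|\sigma(P)-c\|\le \eta\,(M+m(T))$, uniformly in $P\ge P_\eta$ and in the tags. The triangle inequality then gives $\|\sigma(P')-\sigma(P'')\|\le 2\eta(M+m(T))$ for all $P',P''\geq P_\eta$, so choosing $\eta:=\varepsilon\,[\,2(M+m(T)+1)\,]^{-1}$ and $P_\varepsilon:=P_\eta$ settles the Cauchy condition.

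Finally, to exhibit the limit I would run the construction along $\eta_k=1/k$, obtaining partitions $P_{\eta_k}$ and vectors $c_k$; the uniform bound applied on the common refinement $P_{\eta_k}\vee P_{\eta_l}$ shows $\|c_k-c_l\|\le(\eta_k+\eta_l)(M+m(T))$, so $(c_k)$ is Cauchy and converges to some $\alpha\in X$, which is the claimed integral. The only genuinely delicate point is the bookkeeping around the tag-dependence of $\sigma(P)$: because the value is not determined by the partition alone, I cannot quote an off-the-shelf net-Cauchy theorem and must carry the estimate uniformly over all tag choices, which is exactly what the bound above delivers. I expect no other obstacle, since finite additivity is precisely what turns the block sums into the exact masses $m(A_j)$ and identifies $\overline m$ with $m$, while boundedness is what controls the residual set $A_0$.
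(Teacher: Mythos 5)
Your proof is correct and follows essentially the same route as the paper's: both extract from total measurability a partition $\{A_0,A_1,\dots,A_n\}$ with $m(A_0)$ small and small oscillation on the remaining blocks, regroup the Riemann sums of finer partitions block by block via finite additivity (so that the masses $m(B)$, $B\subseteq A_j$, sum exactly to $m(A_j)$), split the estimate into the $A_0$-part controlled by boundedness and the oscillation part controlled by $m(T)$, and conclude through a tag-uniform Cauchy estimate plus completeness of $X$. The only (harmless) difference is that you compare every finer tagged sum to a fixed anchor sum $c$ on $P_\eta$ and then build the limit explicitly along $\eta_k=1/k$, whereas the paper compares two refinements directly (assuming without loss of generality that one refines the other) and appeals to convergence of Cauchy nets; your variant slightly tidies both of those shortcuts but is not a genuinely different argument.
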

\begin{proof}
 Denote by $M$ any positive constant dominating $\|f\|$, and fix arbitrarily $\varepsilon>0$. Then there exists a partition 
$P=\{A_0,A_1,...,A_n\}$ of $T$, such that $m(A_0)< \varepsilon (4M)^{-1}$ and
$$\|f(t_j)-f(\tau_j)\|\leq \frac{\varepsilon}{2m(T)}$$
for all points $t_j,\tau_j\in A_j$, $j=1,...,n$.
Now choose arbitrarily two partitions $P',P''$ finer than $P$, and, without loss of generality, assume that $P''$ is finer than $P'$.
Then
\begin{eqnarray*}
\sigma_f(P')-\sigma_f(P'') &=&\sum_{A\in P'}f(t_A)m(A)-\sum_{B\in P''}f(\tau_B)m(B)= \\ 
&=& \sum_{A\in P'}\sum_{\substack{B\in P'',\\B\subset A}}\big( f(t_A)-f(\tau_B)\big) m(B).
\end{eqnarray*}
Therefore
$$\|\sigma_f(P')-\sigma_f(P'')\|\leq \sum_{A\in P'}\sum_{\substack{B\in P'',\\B\subset A}}\|f(t_A)-f(\tau_B)\|m(B).$$
Now, split the summation above into two parts: 
summands for which the sets $A$ are contained in $A_0$ 
and the remaining summands. Then we have
\begin{eqnarray*}
&& \sum_{\substack{A\in P',\\ A \subseteq A_0}} \, 
\sum_{\substack{B\in P'',\\ B\subset A}}\|f(t_A)-f(\tau_B)\|m(B)\leq 2Mm(A_0)\leq \varepsilon/2,\\
&& 
\sum_{\substack{A\in P',\\ A \not\subseteq A_0}} \, \sum_{\substack{B\in P'',\\ 
B\subset A}}\|f(t_A)-f(\tau_B)\|m(B)\leq \frac{\varepsilon}{2M(T)}\sum_{B\in P''}m(B)\leq \varepsilon/2.
\end{eqnarray*}
Thus
$$\|\sigma_f(P')-\sigma_f(P'')\|\leq \varepsilon.$$
Since $P'$ and $P''$ were chosen arbitrarly finer than $P$, this proves Gould integrability, thanks to completeness of $X$.
\end{proof}
\medskip

In the sequel, we obtain some comparative results between Gould
integrability and total measurability for the case when $m$ has weaker
properties.

\begin{theorem}\label{ex4.7}
Suppose $m:\mathcal{A}\rightarrow \lbrack 0,+\infty )$ is a
null-additive monotone set function which has atoms. If 
$f:T\rightarrow X$  is ${m}$-totally-measurable on an atom $A\in \mathcal{A}$, then $f$ is  $m$-integrable on 
$A.$
\end{theorem}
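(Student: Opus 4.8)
The plan is to show directly that the net of Gould sums $(\sigma_f(P))_{P\in(\mathcal{P}_A,\leq)}$ is Cauchy, and then invoke completeness of $X$. The key structural fact is that, since $A$ is an atom of the monotone null-additive measure $m$, Lemma \ref{1di3}.2) forces every partition of $A$ to carry all of its mass on a single block: if $P=\{B_1,\ldots,B_k\}\in\mathcal{P}_A$, then there is exactly one index $j_0$ with $m(B_{j_0})=m(A)$ and $m(B_j)=0$ otherwise, so that $\sigma_f(P)=f(t_{j_0})m(A)$ with $t_{j_0}\in B_{j_0}$ (the zero-measure blocks contribute nothing). Thus the Gould sum depends only on the value of $f$ at the sampled point of the unique ``heavy'' block, and all oscillation estimates reduce to controlling $f$ on that block.

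First I would fix $\varepsilon>0$ and choose $\delta=\tfrac12\min\{m(A),\varepsilon/m(A)\}$ (recall $m(A)>0$, as $A$ is an atom). Applying $m$-total measurability of $f$ on $A$ with this $\delta$ yields a partition $\{A_0,A_1,\ldots,A_n\}$ of $A$ with $\overline{m}(A_0)<\delta$ and $\mathrm{osc}(f,A_i)<\delta$ for $i\geq 1$. Since $m(A_0)\leq\overline{m}(A_0)<\delta<m(A)$, the heavy block provided by Lemma \ref{1di3}.2) cannot be $A_0$; hence there is an index $i^\ast\in\{1,\ldots,n\}$ with $m(A_{i^\ast})=m(A)$, and by Lemma \ref{1di3}.1) this $A_{i^\ast}$ is itself an atom of $m$.

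Next I would take $P_\varepsilon:=\{A_0,A_1,\ldots,A_n\}$ (discarding $A_0$ if it is empty) as the threshold partition. For any $P\geq P_\varepsilon$ its heavy block $C$ satisfies $m(C)=m(A)$ and $C\subseteq A_k$ for some block $A_k$ of $P_\varepsilon$; monotonicity then gives $m(A_k)=m(A)$, so by the uniqueness in Lemma \ref{1di3}.2) applied to $P_\varepsilon$ we must have $A_k=A_{i^\ast}$, i.e. $C\subseteq A_{i^\ast}$. Consequently $\sigma_f(P)=f(t_P)m(A)$ with the sampled point $t_P\in A_{i^\ast}$, and for any two partitions $P,P'\geq P_\varepsilon$,
$$\|\sigma_f(P)-\sigma_f(P')\|=\|f(t_P)-f(t_{P'})\|\,m(A)\leq \mathrm{osc}(f,A_{i^\ast})\,m(A)<\delta\,m(A)\leq\varepsilon,$$
so the net is Cauchy and hence converges in the Banach space $X$, which is exactly $m$-integrability of $f$ on $A$.

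I expect the only genuine subtlety to be the localization step, namely verifying that the heavy block of \emph{every} refinement of $P_\varepsilon$ remains inside the single good block $A_{i^\ast}$; this is what allows the single oscillation bound on $A_{i^\ast}$ to control the entire tail of the net. Everything else is bookkeeping. It is worth noting that boundedness of $f$ plays no role here, in contrast with Theorem \ref{nuovo47}: the measure-zero blocks contribute nothing to $\sigma_f(P)$, so $f$ may be arbitrary off $A_{i^\ast}$. This is precisely where the atom hypothesis, through Lemma \ref{1di3}, does the heavy lifting.
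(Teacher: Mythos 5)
Your proof is correct, but it follows a genuinely different route from the paper's. The paper's proof is a reduction: it uses Lemma \ref{1di3}.2) to conclude that $m$ restricted to the measurable subsets of $A$ is finitely additive, invokes Remark \ref{rem-mstella}.4) to find a sub-atom $U\subset A$ on which $f$ has small oscillation (hence is \emph{bounded} there), applies Theorem \ref{nuovo47} to get Gould integrability on $U$, and finally pastes back to $A$ via Remark \ref{ex4.2}, using $m(A\setminus U)=0$. You instead verify the Cauchy criterion for the net of Gould sums directly: Lemma \ref{1di3}.2) collapses every sum over a partition of $A$ to $f(t)m(A)$ with $t$ in the unique heavy block, and your localization step --- that the heavy block of any refinement of the threshold partition must sit inside the single good block $A_{i^\ast}$, by monotonicity plus the uniqueness clause of Lemma \ref{1di3}.2) --- is sound and is indeed the crux. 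What each approach buys: the paper's argument is shorter given the machinery already in place (Theorem \ref{nuovo47} and Remark \ref{rem-mstella}), but it routes through boundedness on a sub-atom and a pasting step whose details ($f$ integrable on $A\setminus U$ with null integral because monotonicity kills every subset of $A\setminus U$) are left to the reader; your argument is self-contained, never needs boundedness (your observation that $f$ may be arbitrary off the heavy block is accurate and sharpens the picture), and as a by-product exhibits the integral as a limit of values $f(t)m(A)$ with $t$ ranging over shrinking good blocks, which anticipates the description of the integral given later in Corollary \ref{cinque}.
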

\begin{proof}
 The proof is  more direct than that given in \cite[Theorem 4.13]{gcmg2009} and therefore it is added here.
 Let $A\in \mathcal{A}$ be an atom of $m$. According
to Lemma 3.2, if $\{A_{i}\}_{i=1}^{n}$ is a partition of $A$, then there
exists only one set, for instance, without any loss of generality, $A_{1},$
so that $m(A_{1})=m(A)>0$ and $m(A_{2})=...=m(A_{n})=0.$
From this it clearly follows that $m$ is finitely additive when restricted to the measurable subsets of $A$. 
Moreover, thanks to  \ref{rem-mstella}.4), $f$ is certainly bounded on some atom $U\subset A$. 
Then, thanks to Theorem \ref{nuovo47}, $f$ is Gould integrable on $U$, and it follows easily also integrability on $A$, 
since $m(A\setminus U)=0.$
\end{proof}

By Theorem \ref{ex4.7}  and Remark \ref{ex4.2}, we get:

\begin{corollary}\label{ex4.8}
Suppose\textit{\ }$m$\textit{\ }is a finitely purely atomic null-additive
monotone measure. If $f$\ is ${m}$-totally-measurable on $T$, then $f$\ is 
$m$-integrable on $T.$
\end{corollary}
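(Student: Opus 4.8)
The plan is to reduce the global statement to the atomic case already handled in Theorem \ref{ex4.7}, and then to glue the pieces together using the finite additivity of the Gould integral recorded in Remark \ref{ex4.2}. The whole argument is a straightforward combination of results established earlier in the section, so no genuinely new estimate is needed.

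First I would invoke the hypothesis that $m$ is finitely purely atomic to fix a finite family $\{A_i\}_{i=1}^n\subset\mathcal{A}$ of pairwise disjoint atoms of $m$ with $T=\bigcup_{i=1}^n A_i$. Since $f$ is assumed to be $m$-totally-measurable on the whole space $T$, Remark \ref{rem-mstella}.2) guarantees that $f$ is $m$-totally-measurable on each member $A_i$ of this partition (every $A_i$ being an element of $\mathcal{A}$).

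Next, because $m$ is null-additive and monotone and each $A_i$ is an atom of $m$, the hypotheses of Theorem \ref{ex4.7} are met on every $A_i$; hence $f$ is $m$-integrable on each $A_i$, with a well-defined integral $\int_{A_i} f\,dm$.

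Finally I would assemble these local integrabilities into global integrability on $T$. The sets $A_i$ are pairwise disjoint and measurable, so the second item of Remark \ref{ex4.2} applies to any two of them; applying it inductively over $i=1,\dots,n$ (each partial union $\bigcup_{j\le i}A_j$ lies in $\mathcal{A}$ because $\mathcal{A}$ is an algebra) yields that $f$ is $m$-integrable on $T=\bigcup_{i=1}^n A_i$, together with $\int_T f\,dm=\sum_{i=1}^n\int_{A_i}f\,dm$. I do not expect a serious obstacle here: the only point requiring a little care is that Remark \ref{ex4.2} is phrased for the union of two disjoint sets, so the passage to the finite union $T$ must be carried out by a short induction rather than in a single stroke.
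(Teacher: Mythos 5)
Your proposal is correct and follows exactly the paper's own route: the paper derives this corollary in one line ``By Theorem \ref{ex4.7} and Remark \ref{ex4.2}'', which is precisely your argument of restricting total measurability to each atom $A_i$ (via Remark \ref{rem-mstella}.2)), applying Theorem \ref{ex4.7} on each atom, and gluing by the disjoint-union additivity of Remark \ref{ex4.2}. Your only addition is to spell out the short induction from two sets to the finite union, a detail the paper leaves implicit.
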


\begin{theorem}\label{ex4.10}
Let $m:\mathcal{A}\rightarrow \lbrack 0,+\infty )$
be a null-additive monotone set function
 and $f:T\rightarrow X$  an arbitrary function. If  $f$  is  $m$ 
-integrable on an atom  $A\in \mathcal{A}$  of $m$,  then  $f
$  is $m$-totally-measurable on  $A.$ 
\end{theorem}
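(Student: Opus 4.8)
The plan is to exploit the rigidity of atomic partitions furnished by Lemma \ref{1di3}, together with the characterization of total measurability on atoms in Remark \ref{rem-mstella}.4). Write $c:=m(A)>0$. For an arbitrary partition $P=\{B_1,\dots,B_k\}\in\mathcal{P}_A$, Lemma \ref{1di3}.2) gives a unique index $i_0$ with $m(B_{i_0})=c$ and $m(B_i)=0$ for $i\neq i_0$, while Lemma \ref{1di3}.1) shows that this distinguished piece is itself an atom of $m$ contained in $A$. Hence every integral sum collapses to a single surviving term,
\[
\sigma_f(P)=\sum_{i=1}^{k}f(t_i)m(B_i)=f(t_{i_0})\,c,
\]
where the tag $t_{i_0}$ may be taken to be any point of $B_{i_0}$.

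Next I would unwind the integrability hypothesis. Fix $\varepsilon>0$ and let $P_\varepsilon$ be the partition provided by Remark \ref{ex4.2}, so that $\|\sigma_f(P)-\alpha\|<\varepsilon$ for every $P\geq P_\varepsilon$ and every tag choice, where $\alpha=\int_A f\,dm$. Since $P_\varepsilon\geq P_\varepsilon$, I may apply this to $P_\varepsilon$ itself. Let $U$ be the unique full-measure piece of $P_\varepsilon$; by the paragraph above $U$ is an atom contained in $A$, and $\sigma_f(P_\varepsilon)=f(t)\,c$ whenever the tag on $U$ is $t\in U$. Comparing the two sums obtained from two admissible tags $t,s\in U$, each within $\varepsilon$ of $\alpha$, gives
\[
\|f(t)-f(s)\|\,c\leq \|f(t)c-\alpha\|+\|\alpha-f(s)c\|<2\varepsilon,
\]
so that $\mbox{osc}(f,U)\leq 2\varepsilon/c$.

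To conclude, I would observe that $\varepsilon>0$ was arbitrary and $U$ is an atom contained in $A$, hence $\inf_{V}\mbox{osc}(f,V)=0$, the infimum taken over all atoms $V\subseteq A$. By Remark \ref{rem-mstella}.4) this is exactly the total measurability of $f$ on $A$. If one prefers to verify condition $(*)$ by hand, then for a prescribed $\delta>0$ apply the estimate with $\varepsilon:=\delta c/4$ to produce an atom $U\subseteq A$ with $\mbox{osc}(f,U)\leq\delta/2<\delta$, and take the family $A_1:=U$, $A_0:=A\setminus U$: since $m(A_0)=0$ and $m$ is monotone, Remark \ref{rem-bar}.iii) yields $\overline{m}(A_0)=0<\delta$, so $\{A_0,A_1\}$ witnesses $(*)$.

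The only delicate point is the bookkeeping with the factor $c=m(A)$: integrability delivers an oscillation bound of $2\varepsilon/c$ rather than $\varepsilon$, so the target accuracy in $(*)$ must be met by choosing $\varepsilon$ proportional to $\delta\,m(A)$. No substantial obstacle arises, since Lemma \ref{1di3} eliminates the usual difficulty of controlling sums over arbitrary refinements: only the single full-measure atom ever contributes, and the freedom in its tag is precisely what forces its oscillation to be small.
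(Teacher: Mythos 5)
Your proof is correct and follows essentially the same route as the paper's: both extract the partition $P_{\varepsilon}$ from the integrability condition, use atomicity (Lemma \ref{1di3}.2)) to identify the unique cell of full measure, vary the tag on that cell to bound the oscillation of $f$ there by $2\varepsilon/m(A)$, and dispose of the null complement via vanishing variation. The only differences are cosmetic: you make explicit the $m(A)$-rescaling and the appeal to Remark \ref{rem-mstella}.4), whereas the paper's terser proof leaves these implicit and cites Remark \ref{rem-mstella}.3) instead of Remark \ref{rem-bar}.iii) to get $\overline{m}(A\setminus C_{1})=0$.
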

\begin{proof}
Since $f$ is $m$-integrable on $A$, for every $\varepsilon >0$,
there is $P_{\varepsilon }\in \mathcal{P}_{A}$,
$P_{\varepsilon}=\{C_1,...,C_n\}$, so that
\begin{equation*}
\left\|\sum\limits_{i=1}^{n}f(t_{i})m(C_{i})-\sum
\limits_{i=1}^{n}f(s_{i})m(C_{i}) \right\|<\varepsilon ,
\end{equation*}
for every $t_{i},s_{i}\in C_{i},i\in \{1,2,...,n\}$. Since $A$ is an atom,
suppose $m(A\setminus C_{1})=0,m(C_{i})=0,i\in \{2,3,...,n\}$. It follows that, by   \ref{rem-mstella}.3), 
 $m$ is finitely additive on $A$ and $m(A\setminus C_{1})=0$ 
and so $f$ is $m$-totally-measurable on $A.$
\end{proof}
\medskip

According to Theorems \ref{ex4.7} and \ref{ex4.10}, the following result is obtained:

\begin{corollary}\label{cinque}
Suppose $m:\mathcal{A}\rightarrow \lbrack 0,+\infty )$ is a
null-additive monotone set function 
and $A\in \mathcal{A}$ is an atom of $m$. Then $f$ is 
$m$-integrable on $A$ if and only if $f$  is  $m$-totally-measurable on $A.$
 Moreover, if this is the case, the integral of $f$ on $A$ is equal to $xm(A)$, where $x$ is the  unique element in $X$ such that 
$$\{ x \} =\bigcap_{U\in \mathcal{U}}\overline{f(U)},$$
where $\mathcal{U}$ is the filter of all atoms $U\subset A$.
\end{corollary}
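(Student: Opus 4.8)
The equivalence in the first assertion requires no new work: it is exactly the combination of Theorem \ref{ex4.7} (total measurability on the atom $A$ forces $m$-integrability) with Theorem \ref{ex4.10} (the converse implication). So the plan concentrates on the ``moreover'' part, which splits into two tasks: showing that $\bigcap_{U\in\mathcal{U}}\overline{f(U)}$ reduces to a single point $x$, and then identifying the integral with $x\,m(A)$.

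I would first check that $\mathcal{U}$, the family of atoms contained in $A$, really is a filter; the only nontrivial axiom is stability under finite intersection. If $U,V\in\mathcal{U}$, then by Lemma \ref{1di3}.1) one has $m(A\setminus U)=m(A\setminus V)=0$, and since $A\setminus(U\cap V)=(A\setminus U)\cup(A\setminus V)$, null-additivity gives $m(A\setminus(U\cap V))=0$; applying null-additivity once more to the partition $\{U\cap V,\,A\setminus(U\cap V)\}$ of $A$ yields $m(U\cap V)=m(A)>0$, so $U\cap V$ is again an atom by Lemma \ref{1di3}.1). (Stability under measurable supersets is immediate from the same Lemma.) In particular $\mathcal{U}$ is downward directed, and for any $U,V\in\mathcal{U}$ the set $U\cap V$ is nonempty, so the family $\{\overline{f(U)}\}_{U\in\mathcal{U}}$ has the finite intersection property.

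Next I would produce the point $x$. By Remark \ref{rem-mstella}.4), total measurability of $f$ on $A$ is equivalent to $\inf_{U\in\mathcal{U}}\mbox{osc}(f,U)=0$, and $\mbox{osc}(f,U)$ is precisely the diameter of $f(U)$ (hence of $\overline{f(U)}$). Choosing for each $n$ an atom $V_n$ with $\mbox{osc}(f,V_n)<1/n$ and setting $U_n:=V_1\cap\cdots\cap V_n\in\mathcal{U}$, I obtain a decreasing sequence of atoms with $\mbox{osc}(f,U_n)<1/n$. Picking any $x_n\in f(U_n)$ gives a Cauchy sequence (for $p\geq n$, $x_p\in f(U_p)\subseteq f(U_n)$, so $\|x_p-x_n\|\le \mbox{osc}(f,U_n)<1/n$), which converges to some $x\in X$ by completeness. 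To see that $x$ lies in \emph{every} $\overline{f(U)}$, note that $U\cap U_n$ is a nonempty atom, so picking $y_n\in f(U\cap U_n)\subseteq f(U)\cap f(U_n)$ one gets $\|y_n-x\|\le\mbox{osc}(f,U_n)+\|x_n-x\|\to 0$, whence $x\in\overline{f(U)}$; uniqueness follows because any two points of $\bigcap_U\overline{f(U)}$ lie in each $\overline{f(U_n)}$ and so are at distance $<1/n$ for all $n$. Thus $\bigcap_{U\in\mathcal{U}}\overline{f(U)}=\{x\}$.

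Finally I would evaluate the integral directly from the net of Riemann sums. By Lemma \ref{1di3}.2), for any partition $P=\{C_1,\dots,C_k\}$ of $A$ exactly one block, say $C_1$, satisfies $m(C_1)=m(A)$ while the others are $m$-null, so $\sigma_f(P)=f(t_1)m(A)$ with $t_1\in C_1$. Given $\varepsilon>0$, pick $U\in\mathcal{U}$ with $\mbox{osc}(f,U)<\varepsilon$ and take $P_\varepsilon=\{U,A\setminus U\}$ (or $P_\varepsilon=\{A\}$ if $U=A$). For every $P\geq P_\varepsilon$, the full-measure block $C_1$ must lie inside $U$, since $m(A\setminus U)=0$ and $m(C_1)=m(A)>0$; hence $t_1\in U$ and, as $x\in\overline{f(U)}$, $\|f(t_1)-x\|\le\mbox{osc}(f,U)<\varepsilon$. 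Consequently $\|\sigma_f(P)-x\,m(A)\|=\|f(t_1)-x\|\,m(A)\le\varepsilon\,m(A)$ for all $P\geq P_\varepsilon$, so the net converges to $x\,m(A)$, which is therefore the Gould integral. I expect the delicate step to be the singleton claim: it is where completeness of $X$ and the filter (downward-directedness) property must be combined carefully, in particular to promote membership of $x$ in the closures along the chosen cofinal sequence $(U_n)$ to membership in $\overline{f(U)}$ for \emph{all} $U\in\mathcal{U}$.
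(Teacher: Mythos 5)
Your proposal is correct and takes essentially the same route as the paper's proof: Remark \ref{rem-mstella}.4) gives a decreasing sequence of atoms $U_n$ with $\mbox{osc}(f,U_n)\to 0$, completeness of $X$ produces the point $x$, Lemma \ref{1di3}.2) collapses every sufficiently fine Riemann sum to $f(t_1)m(A)$ with the tag forced into a small-oscillation atom, and uniqueness follows from the diameter bound $\mathrm{diam}(\overline{f(U_n)})=\mbox{osc}(f,U_n)$. The only (welcome) differences are expository: you verify explicitly that $\mathcal{U}$ is closed under intersection, which the paper leaves implicit in its ``without loss of generality the $U_n$ are decreasing'' step, and you prove $x\in\overline{f(U)}$ for all $U\in\mathcal{U}$ directly via points of $U\cap U_n$, where the paper argues by contradiction.
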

\begin{proof}
The only fact to prove is the conclusion about the integral. Without loss of generality, we shall assume that $m(A)=1$.
Since $f$ is totally measurable on $A$, by  \ref{rem-mstella}.4), it follows that, for every integer $n$ there exists an 
atom $U_n\subset A$ such that $\mbox{osc}(f,U_n) \leq n^{-1}.$
 Without loss of generality, the atoms $U_n$ can be chosen to be decreasing. 
Therefore, choosing arbitrarily an element $u_n\in U_n$ for every $n$, the sequence $(f(u_n))_n$ is Cauchy in $X$ and 
therefore convergent to some element $x$. 
We shall prove now that $x$ is the integral of $f$ in $A$. Indeed, fix arbitrarily $\varepsilon>0$ and pick any integer $n$ larger than $\frac{1}{\varepsilon}$, and such that $\|x-u_n\|\leq \varepsilon$. Then, consider any partition $P$ of $A$, finer than $(U_n,A\setminus U_n)$: setting $P=(A_1,...,A_k)$, and choosing points $a_i\in A_i$, $i=1,...,k$ one has
$$\sigma(f,P)=f(a_j),$$
where $A_j$ is the unique element of $P$ contained in $U_n$ and  belonging to $\mathcal{U}$. Now, we have
\begin{eqnarray*}
\|f(a_j)-x\| &=& \|f(a_j)-x\|\leq (\|x-f(u_n)\|+\|f(u_n)-f(a_j)\|)\leq
 2\varepsilon
\end{eqnarray*}
since both $u_n$ and $a_j$ belong to $U_n$.
This clearly shows that $x$ is the integral of  $f$ on $A$.
Finally, let us prove that  $x$ is the unique point of $X$ belonging to the intersection of all the sets $\overline{f(U)}$, 
as $U$ runs among the atoms contained in $A$.
Indeed, choosing the sequence $(U_n)$ as above, one clearly has $x\in \overline{f(U_n)}$ for every $n$. 
Since  the diameter of the set $f(U_n)$ coincides with  $\mbox{osc}(f,U_n)$ and $diam(f(U_n))=diam(\overline{f(U_n)})$, there is at most one point in common to all the sets $\overline{f(U_n)}$.
Finally, if there exists an element $U\in \mathcal {U}$ such that $x\notin\overline{f(U)}$,
 the same procedure as above can be repeated, replacing $U_n$ with $U_n\cap U$ 
and choosing points $u_n\in U_n\cap U$, showing that the limit of the sequence $f(u_n)$ is still $x$, contradiction.
\end{proof}

%%Aggiunta di Mimmo {\color{blue}
An interesting consequence is that {\em any} bounded real function is integrable in a purely finitely atomic space. 
Indeed, we have
\begin{corollary}\label{sei}
Let  $m:\mathcal{A}\rightarrow \lbrack 0,+\infty )$ be a
null-additive finitely purely atomic monotone set function. Then every bounded measurable mapping
 $f:T\to \mathbb{R}$ is $m$-integrable and $m$-totally measurable.
\end{corollary}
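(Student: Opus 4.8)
The plan is to prove $m$-total measurability of $f$ on all of $T$ and then read off integrability from Corollary \ref{ex4.8}. Since $m$ is finitely purely atomic, I would first write $T=\bigcup_{i=1}^{n}A_{i}$ with the $A_{i}$ pairwise disjoint atoms of $m$. The heart of the matter is a level-set decomposition on a single atom: fixing $\varepsilon>0$ and an atom $A$, boundedness of $f$ lets me cover its range by finitely many intervals $I_{1},\ldots,I_{k}$ each of length $<\varepsilon$, and I consider the finite measurable partition $E_{j}:=A\cap f^{-1}(I_{j})$ of $A$. By Lemma \ref{1di3}.2) exactly one block, say $E_{j_{0}}$, carries the full mass $m(E_{j_{0}})=m(A)$ while all the others are $m$-null; by Lemma \ref{1di3}.1) this block $U:=E_{j_{0}}$ is itself an atom contained in $A$, and plainly $\mbox{osc}(f,U)\leq |I_{j_{0}}|<\varepsilon$. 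Thus $\inf_{U\in\mathcal{U}}\mbox{osc}(f,U)=0$, so by Remark \ref{rem-mstella}.4) the function $f$ is $m$-totally measurable on each atom $A_{i}$.

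To pass from the atoms to $T$, I would, for each fixed $\varepsilon$, select on every $A_{i}$ a subatom $U_{i}\subseteq A_{i}$ as above, with $\mbox{osc}(f,U_{i})<\varepsilon$ and $m(U_{i})=m(A_{i})$; Lemma \ref{1di3}.1) then gives $m(A_{i}\setminus U_{i})=0$, and monotonicity (Remark \ref{rem-bar}.iii)) upgrades this to $\overline{m}(A_{i}\setminus U_{i})=0$. Setting $B_{0}:=\bigcup_{i=1}^{n}(A_{i}\setminus U_{i})$, the family $\{B_{0},U_{1},\ldots,U_{n}\}$ is a partition of $T$ off which the oscillation of $f$ is $<\varepsilon$. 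It then remains only to verify $\overline{m}(B_{0})<\varepsilon$, and here the key is that $\overline{m}$ inherits null-additivity from $m$ (the first Proposition of Section \ref{two}, stating that $\overline{m}$ is null-additive whenever $m$ is): applying it finitely many times to the $\overline{m}$-null sets $A_{i}\setminus U_{i}$ yields $\overline{m}(B_{0})=0$. This confirms condition $(\ast)$, so $f$ is $m$-totally measurable on $T$, whereupon Corollary \ref{ex4.8} delivers $m$-integrability.

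I expect the main obstacle to be precisely this last verification, namely the control of $\overline{m}(B_{0})$. Because $m$ is only assumed null-additive and monotone — not subadditive — I cannot invoke additivity of $\overline{m}$ (Remark \ref{rem-bar}.v)) nor the gluing statement Remark \ref{rem-mstella}.5), both of which presuppose subadditivity. The workaround is to arrange $B_{0}$ to be a genuine $\overline{m}$-null set rather than merely a set of small variation, which the atomic structure makes possible: each discarded piece $A_{i}\setminus U_{i}$ is itself $\overline{m}$-null, and null-additivity of $\overline{m}$ closes the argument over the finite union. Everything else is bookkeeping, and the conclusion that $f$ is simultaneously $m$-totally measurable and $m$-integrable follows immediately.
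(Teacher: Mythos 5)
Your proof is correct, but it runs in the opposite direction from the paper's. The paper reduces to a single atom and proves \emph{integrability first}: it introduces the filter $\mathcal{U}$ of measurable sets $U$ with $m(U)=m(T)$ and shows, by contradiction, that $\sup_{U\in \mathcal{U}}\inf_{t\in U}f(t)=\inf_{U\in \mathcal{U}}\sup_{t\in U}f(t)$, the key step being the dichotomy for the measurable level set $B=\{t\in T:\,f(t)<u\}$ (either $m(B)=m(T)$ or $m(B^{c})=m(T)$, since $T$ is an atom and $m$ is null-additive); integrability then follows via Corollary \ref{cinque} and Remark \ref{rem-mstella}.4), and total measurability is read off afterwards from Corollary \ref{cinque}. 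You instead prove \emph{total measurability first}, by an explicit quantitative construction: chopping the bounded range into intervals of length $<\varepsilon$ and invoking Lemma \ref{1di3}.2) to isolate the unique full-mass block $E_{j_{0}}$, which by Lemma \ref{1di3}.1) is a subatom with $\mbox{osc}(f,E_{j_{0}})<\varepsilon$; then Corollary \ref{ex4.8} delivers integrability. Both arguments exploit measurability of $f$ in exactly the same way (preimages of intervals lie in $\mathcal{A}$, so the atom dichotomy applies to them), so your route is essentially a constructive reformulation of the paper's sup--inf argument. Where your write-up genuinely adds value is the passage from the atoms to $T$: the paper simply asserts that total measurability on each atom gives it ``on the whole of $T$'', which cannot be justified by Remark \ref{rem-mstella}.5) since $m$ is not assumed subadditive; your explicit exceptional set $B_{0}=\bigcup_{i=1}^{n}(A_{i}\setminus U_{i})$, shown to be $\overline{m}$-null via Remark \ref{rem-bar}.iii) together with null-additivity (indeed one can even get $m(B_{0})=0$ directly by iterating null-additivity of $m$ and then apply Remark \ref{rem-bar}.iii) once), closes this gap cleanly. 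One tiny bookkeeping point: take the intervals $I_{j}$ pairwise disjoint (say half-open) and discard empty $E_{j}$ before applying Lemma \ref{1di3}.2), which requires a genuine partition of $A$.
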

\begin{proof}
Of course, it is enough to prove integrability of $f$ on each atom $A$, so we shall assume that $T$ itself is an atom for $m$. 
Let us denote by $\mathcal{U}$ the filter of those elements $U$ of $\mathcal{A}$ such that $m(U)=m(T)$. Thanks to the previous result, and to the Remark \ref{rem-mstella},4) it is enough to prove that
\begin{eqnarray}\label{limsup}
\sup_{U\in \mathcal{U}}\inf_{t\in U}f(t)=\inf_{U\in \mathcal{U}}\sup_{t\in U}f(t).\end{eqnarray}
Of course, the left-hand term is not greater than the right-hand one. So, by contradiction, let us assume that a real number $u$ exists, such that
\begin{eqnarray}\label{limsup2}
\sup_{U\in \mathcal{U}}\inf_{t\in U}f(t)<u<\inf_{U\in \mathcal{U}}\sup_{t\in U}f(t).\end{eqnarray}
Now, let $B:=\{t\in T: f(t)<u\}$. Since $f$ is measurable, $B$ is too, and then either $m(B)=m(T)$ or $m(B^c)=m(T)$.
 In the first case $B\in \mathcal{U}$ but 
$\sup_{t\in B}f(t)\leq u$, contradicting (\ref{limsup2}). In the second case, $B^c\in \mathcal{U}$ but 
$\inf_{t\in B^c}f(t)\geq u$, thus contradicting (\ref{limsup2}) again. 
Then (\ref{limsup}) is true, and integrability of $f$ is proved. Then, thanks to Corollary \ref{cinque}, $f$ is $m$-totally measurable on every atom $A$, and therefore on the whole of $T$.
\end{proof}

However, as soon as $X$ is infinite-dimensional, there exist $X$-valued bounded {\em measurable} maps on some atomic space $T$ that are not integrable, as the following example shows.

\begin{example}\rm
Let $X$ be any infinite-dimensional Banach space, and denote by $B_X$ its unit ball. Of course, $B_X$ is not compact, so there exist an $\varepsilon>0$ and a  sequence $(x_n)_n$ in $B_X$, such that
$\|x_n-x_m\|\geq \varepsilon$ whenever $n\neq m$.
Denote by $Y$ the countable set $\{x_n:n\in \enne\}$ and let $\varphi:\enne\to Y$ be any bijection. Now, let $T$ be the space $\enne$ endowed with the $\sigma$-algebra $2^T$. Moreover, let $\mathcal{U}$ be any free ultrafilter on $T$, and $m:2^T\to \{0,1\}$ be the ultrafilter measure associated to $\mathcal{U}$, i.e. such that $m(E)=1$ if 
$E\in \mathcal{U}$ and $m(E)=0$ otherwise. Now, the function $\varphi$ above, mapping $T$ into $X$, 
is clearly bounded and measurable (in the sense that the inverse image of any Borel subset of $X$ is in the
 $\sigma$-algebra fixed in $T$), but is not integrable, since the set $\varphi(U)$ has diameter larger than
 $\varepsilon$ for every $U\subset T$ with $m(U)=1$.
\end{example}

The following result states that, under several supplementary conditions, any
sequence of totally-measurable bounded vector functions is uniformly bounded
almost everywhere.
\begin{theorem}
Let $\mathcal{A}$ be a $\sigma $-algebra and $m:\mathcal{A}\rightarrow
\lbrack 0,+\infty )$ be finitely purely atomic and $\sigma $-null-additive.
Suppose that for every $n\in \mathbb{N}$, $f_{n}:T\rightarrow X$ is  $m$-integrable on $T$. 
If there is $K>0$ so that $\|\int_{A}f_{n}dm\|
\leq K$, for every $n\in \mathbb{N}$ and every atom $A\in \mathcal{A}$,
there exists $U\in \mathcal{A}$ so that $m(T\setminus U)=0$ and $
(f_{n})_{n} $ is uniformly bounded on $U$.
\end{theorem}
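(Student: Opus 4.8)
The plan is to handle one atom at a time and then glue the finitely many atoms together. Since $m$ is finitely purely atomic, I would fix a decomposition $T=\bigcup_{i=1}^{p}A_{i}$ into pairwise disjoint atoms of $m$. For every atom $A$ the quantity $\int_{A}f_{n}\,dm$ already occurs in the hypothesis, so in particular each $f_{n}$ is $m$-integrable on each $A_{i}$ with $\|\int_{A_{i}}f_{n}\,dm\|\leq K$; by Theorem \ref{ex4.10} this makes $f_{n}$ $m$-totally-measurable on each $A_{i}$. Applying Corollary \ref{cinque} on the atom $A_{i}$, I would write $\int_{A_{i}}f_{n}\,dm=x_{n}^{(i)}\,m(A_{i})$, where $x_{n}^{(i)}$ is the unique element with $\{x_{n}^{(i)}\}=\bigcap_{U}\overline{f_{n}(U)}$ (intersection over atoms $U\subseteq A_{i}$); the bound then gives $\|x_{n}^{(i)}\|\leq K/m(A_{i})$.

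Fixing $A_{i}$ and an index $n$, I would next invoke total measurability in the form of Remark \ref{rem-mstella}.4) to select an atom $V_{n}^{(i)}\subseteq A_{i}$ with $\mbox{osc}(f_{n},V_{n}^{(i)})<1$. Since $x_{n}^{(i)}\in\overline{f_{n}(V_{n}^{(i)})}$, a one-line triangle inequality yields $\|f_{n}(t)\|\leq\|x_{n}^{(i)}\|+1\leq K/m(A_{i})+1$ for every $t\in V_{n}^{(i)}$. The decisive point is that $V_{n}^{(i)}$ is an atom contained in the atom $A_{i}$ and has positive measure, so Lemma \ref{1di3}.1) forces $m(A_{i}\setminus V_{n}^{(i)})=0$.

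Now the countably many ``bad'' pieces are amalgamated by $\sigma$-null-additivity. Each $A_{i}\setminus V_{n}^{(i)}$ being null, one has $m\big(\bigcup_{n}(A_{i}\setminus V_{n}^{(i)})\big)=0$, so setting $U_{i}:=\bigcap_{n}V_{n}^{(i)}\in\mathcal{A}$ gives $m(A_{i}\setminus U_{i})=0$, while on $U_{i}\subseteq V_{n}^{(i)}$ the estimate $\|f_{n}(t)\|\leq K/m(A_{i})+1$ holds simultaneously for all $n$. Finally I would put $U:=\bigcup_{i=1}^{p}U_{i}\in\mathcal{A}$. Because the $A_{i}$ are disjoint and $U_{i}\subseteq A_{i}$, we have $T\setminus U=\bigcup_{i=1}^{p}(A_{i}\setminus U_{i})$, a finite union of null sets, hence $m(T\setminus U)=0$; and for $t\in U$ there is an index $i$ with $t\in U_{i}$, so $\|f_{n}(t)\|\leq 1+\max_{1\leq i\leq p}K/m(A_{i})=:M$ for every $n$. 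Thus $(f_{n})_{n}$ is uniformly bounded on $U$ by $M$, as required.

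The routine verifications---the triangle-inequality bound inside $V_{n}^{(i)}$, and the fact that a finite union of null sets is null---are immediate from null-additivity. The genuinely load-bearing step, and the one to be careful about, is the transition from countably many functions to a single co-null set: it rests on Lemma \ref{1di3}.1) turning every complement $A_{i}\setminus V_{n}^{(i)}$ into a null set, which is precisely what licenses the use of $\sigma$-null-additivity. Dropping the atom hypothesis would make these complements potentially large, and dropping $\sigma$-null-additivity would prevent collapsing the countable union to a null set; both assumptions are therefore used exactly at this junction.
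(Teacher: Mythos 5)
Your proof is correct and follows essentially the same route as the paper's: Corollary \ref{cinque} yields, for each $n$, an atom on which $f_n$ has oscillation at most $1$ and whose image-closure contains the integral point, and $\sigma$-null-additivity collapses the countable intersection of these atoms to a co-null set on which all the $f_n$ are simultaneously bounded. The only difference is cosmetic: the paper reduces at once, ``without loss of generality,'' to the case where $T$ is a single atom with the integrals in $B(0,K)$, whereas you carry out explicitly both the per-atom scaling $\|x_n^{(i)}\|\leq K/m(A_i)$ and the finite gluing $U=\bigcup_{i=1}^{p}U_i$ across the atoms --- details the paper's reduction glosses over.
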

\begin{proof}
Without loss of generality, we shall assume that $T$ itself is an atom, and  that the integrals $x_n:=\int_T f_n dm$ are contained in the ball $B(0,K)$ of $X$.
Thanks to the previous Corollary \ref{cinque}, for each $n$ there exists an atom $U_n\subset T$ such that 
$diam(f_n(U_n))\leq 1$
and $x_n\in \overline{f_n(U_n)}$; hence $f_n(U_n)\subset B(x_n,1)\subset B(0,K+1)$ for all $n$. 
Consider now $U:=\bigcap_n U_n$. Since $m(U_n^c)=0$ and $m$ is $\sigma$-null-additive, it follows that 
$m(\bigcup_n U_n^c)=0$, and therefore $U$ is an atom. Of course, for every $n$, $f_n(U)\subset f_n(U_n)
\subset B(0,K+1)$ and this concludes the proof.
\end{proof}

\begin{theorem}
Let $\mathcal{A}$ be a $\sigma $-algebra and $m:\mathcal{A}\rightarrow
\lbrack 0,+\infty )$ be finitely purely atomic and $\sigma $-null-additive.
Suppose that for every $n\in \mathbb{N}$, $f_{n}:T\rightarrow X$ is  $m$-integrable on $T$. 
Moreover, assume that, for every atom $A$, there exists in $X$ the limit
$$\lim_n \int_A f_n dm=x(A).$$
Then, for every atom $A$ there exists an atom $U\subset A$ such that $f_n$ uniformly converges in $U$ to the constant $x(A)$.
\end{theorem}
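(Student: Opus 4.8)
The plan is to follow the pattern of the preceding theorem. First I would fix the atom $A$ and reduce to the case $T=A$ by restricting $m$ and each $f_n$ to $A$; after rescaling I may assume $m(A)=1$. With this normalization Corollary \ref{cinque} tells me that the integral $x_n:=\int_A f_n\,dm$ coincides with the unique point of $\bigcap_{U\in\mathcal{U}}\overline{f_n(U)}$, where $\mathcal{U}$ is the filter of all atoms contained in $A$, so that the hypothesis $\lim_n\int_A f_n\,dm=x(A)$ reads simply $x_n\to x(A)$.

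Next, since each $f_n$ is $m$-integrable on the atom $A$, Corollary \ref{cinque} makes it $m$-totally-measurable there; hence by \ref{rem-mstella}.4) I can select, for every $n$, an atom $U_n\subset A$ with $\mbox{osc}(f_n,U_n)\leq 1/n$. Because $x_n\in\overline{f_n(U_n)}$ and $\mbox{osc}(f_n,U_n)=diam(f_n(U_n))$, this yields the uniform bound $\|f_n(t)-x_n\|\leq 1/n$ for every $t\in U_n$.

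Then I would set $U:=\bigcap_n U_n$. Each $U_n$ being an atom in $A$ with $m(A)=1$, one has $m(A\setminus U_n)=0$, and $\sigma$-null-additivity gives $m(A\setminus U)=m\big(\bigcup_n(A\setminus U_n)\big)=0$; null-additivity then forces $m(U)=m(A)=1>0$, so $U$ is an atom by Lemma \ref{1di3}.1). For $t\in U\subseteq U_n$ the triangle inequality gives
$$\|f_n(t)-x(A)\|\leq\|f_n(t)-x_n\|+\|x_n-x(A)\|\leq \frac{1}{n}+\|x_n-x(A)\|,$$
a bound independent of $t$ which tends to $0$ as $n\to\infty$, since $x_n\to x(A)$. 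Hence $f_n\to x(A)$ uniformly on $U$.

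The crucial step is forming the countable intersection $U=\bigcap_n U_n$: it is precisely $\sigma$-null-additivity, and not merely null-additivity, that guarantees $m(A\setminus U)=0$ and hence keeps $U$ an atom, so this is where the stronger hypothesis on $m$ is indispensable. Everything else is the elementary combination of the individual oscillation estimates with the convergence of the integrals, and I expect no further obstacle.
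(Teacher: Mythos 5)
Your proof is correct and follows essentially the same route as the paper's own: atoms $U_n\subset A$ with $\mbox{osc}(f_n,U_n)\leq 1/n$ supplied by Corollary \ref{cinque} and Remark \ref{rem-mstella}.4), the intersection $U=\bigcap_n U_n$ kept an atom via $\sigma$-null-additivity (the paper additionally arranges the $U_n$ to be decreasing, which your direct intersection renders unnecessary), and the final estimate $\|f_n(t)-x(A)\|\leq \frac{1}{n}+\|x_n-x(A)\|$. If anything, your explicit normalization $m(A)=1$ tidies a point the paper glosses over, since $x_n\in\overline{f_n(U_n)}$ holds for the integral itself only under that normalization --- otherwise Corollary \ref{cinque} places $x_n/m(A)$ in $\overline{f_n(U_n)}$ and the limiting constant in the statement should properly read $x(A)/m(A)$.
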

\begin{proof}
Again, without loss of generality, we shall assume that $T$ is an atom for $m$. Let us denote $x_n:=\int_T f_n dm$, and $x:=\lim_n x_n$.
Thanks to the Corollary \ref{cinque}, for every $n$ there exists an atom $U_n$ such that $diam(f_n(U_n))\leq \frac{1}{n}$, and $x_n\in \overline{f_n(U_n)}$. Without loss of generality, we shall assume that $U_n\subset U_{n-1}$ for each $n$. Since $m$ is $\sigma$-null-additive, then $U:=\bigcap U_n$ is still an atom: we shall prove now that the functions $f_n$ uniformly converge on $U$ to the constant $x$.
Fix $t\in U$ and $n$. Since $f_n(t)\subset\overline{f_n(U_n)}$, we get that $\|f_n(t) - x_n\|\leq \frac{1}{n}.$ Then, for every positive $\varepsilon$, an integer $n_0$ exists, such that $\|x_n-x\|\leq \varepsilon$
as soon as $n>n_0$; therefore, when $n\geq n_0$,
$$\|f_n(t)-x\|\leq \varepsilon$$
for all $t\in U$.
\end{proof}

\section{Radon-Nikod\'ym Theorem}
In this section we shall investigate the behavior of the integral measure with respect to a finitely purely 
atomic measure $m$, and shall deduce the existence of a Radon-Nikod\'ym derivative, under mild conditions.

\begin{proposition}\label{necessary}
Let $m:\mathcal{A}\to [0,+\infty)$ be a null-additive, monotone, and finitely purely atomic measure. 
For every Gould integrable mapping $f:T\to X$, denote by $\mu$ the integral measure of $f$, i.e.
$$\mu(B)=\int_B f dm.$$
Then $\mu$ is finitely additive, finitely purely atomic and absolutely continuous with respect to $m$, i.e. 
$m(E)=0$ implies that $\mu(E)=0$ for $E\in \mathcal{A}$.
\end{proposition}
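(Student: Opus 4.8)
The plan is to reduce everything to the behaviour of $f$ on the individual atoms of $m$ and to derive an explicit representation of $\mu$ from which the three assertions become transparent. Write $T=\bigcup_{i=1}^{n}A_{i}$, with the $A_{i}$ pairwise disjoint atoms of $m$ (Definition \ref{d-atom}). First I would show that Gould integrability of $f$ on $T$ forces integrability on each $A_{i}$. Given $\varepsilon>0$, use the Cauchy criterion implicit in Remark \ref{ex4.2} to pick a partition $P_{\varepsilon}$ of $T$, which we may assume finer than $\{A_{1},\dots,A_{n}\}$, controlling $\|\sigma(P')-\sigma(P'')\|$ for all $P',P''\geq P_{\varepsilon}$. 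Extending any two partitions $Q',Q''$ of a fixed $A_{i}$ (each finer than the trace of $P_{\varepsilon}$ on $A_{i}$) by the \emph{same} pieces and tags on the remaining atoms produces partitions of $T$ above $P_{\varepsilon}$ whose Riemann sums differ exactly by $\sigma_{A_{i}}(Q')-\sigma_{A_{i}}(Q'')$; hence the net of sums on $A_{i}$ is Cauchy and, by completeness of $X$, $f$ is $m$-integrable on $A_{i}$. By Corollary \ref{cinque} there is then a point $x_{i}\in X$ with $\int_{A_{i}}f\,dm=x_{i}\,m(A_{i})$, where $\{x_{i}\}=\bigcap_{U}\overline{f(U)}$ over the atoms $U\subset A_{i}$.

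Next I would establish the representation
$$\mu(B)=\sum_{i=1}^{n}x_{i}\,m(B\cap A_{i}),\qquad B\in\mathcal{A}.$$
For each $i$ the set $B\cap A_{i}\subset A_{i}$ is, by Lemma \ref{1di3}.1, either $m$-null or an atom with $m(B\cap A_{i})=m(A_{i})$. In the null case every partition $P$ of $B\cap A_{i}$ has $\sigma(P)=0$ (monotonicity kills each piece), so $f$ is integrable there with integral $0=x_{i}\,m(B\cap A_{i})$. In the second case $f$ is totally measurable on $A_{i}$ (Theorem \ref{ex4.10}), hence on its subset $B\cap A_{i}$ (Remark \ref{rem-mstella}.2 applied inside $A_{i}$), so $f$ is integrable on $B\cap A_{i}$ by Theorem \ref{ex4.7}; its value is $y_{i}\,m(B\cap A_{i})$ by Corollary \ref{cinque}, and since the atoms contained in $B\cap A_{i}$ form a subfamily of those contained in $A_{i}$, the defining intersection for $A_{i}$ is contained in that for $B\cap A_{i}$, whence $y_{i}=x_{i}$ (both are singletons). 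Summing over $i$ via the additivity on disjoint unions of Remark \ref{ex4.2} yields the displayed formula and shows $\mu(B)$ is well defined for every $B\in\mathcal{A}$.

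From the representation the three properties follow readily. For \emph{absolute continuity}, $m(E)=0$ gives $m(E\cap A_{i})=0$ for every $i$ by monotonicity, so each term vanishes and $\mu(E)=0$. For \emph{finite additivity}, the set function $B\mapsto m(B\cap A_{i})$ is finitely additive on $A_{i}$ by Remark \ref{rem-mstella}.3, and $\mu$ is a finite combination of such functions, so for disjoint $B,C$ one gets $\mu(B\cup C)=\sum_{i}x_{i}\bigl(m(B\cap A_{i})+m(C\cap A_{i})\bigr)=\mu(B)+\mu(C)$. Finally, for $B\subset A_{i}$ one has $\mu(B')=x_{i}\,m(B'\cap A_{i})$ for all $B'\subset B$, so, using the dichotomy $m$-null / $m$-full for subsets of $A_{i}$, the set $A_{i}$ is $\mu$-null precisely when $x_{i}=0$ and is a $\mu$-atom whenever $x_{i}\neq 0$.

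The delicate point, which I would handle last, is assembling the $\mu$-atoms into a finite partition of $T$ as required by Definition \ref{d-atom}, since some atoms $A_{i}$ may be $\mu$-null (those with $x_{i}=0$). Assuming $\mu\not\equiv 0$, fix an index $i_{0}$ with $x_{i_{0}}\neq 0$, let $N$ be the union of the $\mu$-null atoms, and replace $A_{i_{0}}$ by $A_{i_{0}}\cup N$. For any $B\subset A_{i_{0}}\cup N$ only the $A_{i_{0}}$-term survives, so $\mu(B')=x_{i_{0}}\,m(B'\cap A_{i_{0}})$; the dichotomy for $B\cap A_{i_{0}}\subset A_{i_{0}}$ then shows that either $B$ is $\mu$-null or $(A_{i_{0}}\cup N)\setminus B$ is, so $A_{i_{0}}\cup N$ is again a $\mu$-atom, non-null because $\mu(A_{i_{0}})=x_{i_{0}}m(A_{i_{0}})\neq0$. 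The other non-null $A_{i}$ are untouched, so $T$ is exhibited as a finite disjoint union of $\mu$-atoms, proving finite pure atomicity of $\mu$ (the case $\mu\equiv0$ being trivial). The main obstacle is thus organizational rather than analytic: the initial localization of Gould integrability to each atom, and the careful absorption of the $\mu$-null atoms while preserving the atom property.
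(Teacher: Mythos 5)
Your proof is correct, but it takes a genuinely different and more computational route than the paper's. The paper's own proof is three lines long: finite additivity comes straight from Remark \ref{ex4.2}; absolute continuity from monotonicity of $m$ (every partition of an $m$-null $E$ has all pieces $m$-null, so every Riemann sum over $E$ vanishes); and pure atomicity from a direct transfer argument --- if $A$ is an $m$-atom and $B\subset A$ is measurable, then $B$ or $A\setminus B$ is $m$-null, hence (hereditarily, by monotonicity and absolute continuity) $\mu$-null, so $A$ is either a $\mu$-atom or $\mu$-null. You instead first localize Gould integrability to each atom $A_i$ by a Cauchy-net argument, invoke Corollary \ref{cinque} to write $\int_{A_i}f\,dm=x_i\,m(A_i)$, and establish the representation $\mu(B)=\sum_i x_i\,m(B\cap A_i)$ --- which is essentially the paper's \emph{next} proposition (with $a_i=x_i\,m(A_i)$) --- before reading off all three properties. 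Your detour buys two things the paper leaves implicit: (i) you actually prove that $\mu(B)$ is defined for \emph{every} $B\in\mathcal{A}$, i.e.\ that $f$ is integrable on every measurable set, which the statement of the proposition tacitly presupposes; (ii) you close the endgame of pure atomicity properly --- Definition \ref{d-atom} requires $T$ to be a finite disjoint union of $\mu$-atoms, yet some $A_i$ may be $\mu$-null, a gap the paper covers only with the phrase ``and this implies that $\mu$ is purely finitely atomic''; your absorption of the $\mu$-null atoms into a fixed non-null one, with the check that the enlarged set is still a $\mu$-atom, is exactly the missing bookkeeping. The cost is length and some redundancy (finite additivity needs only Remark \ref{ex4.2}, no representation), plus one boundary case you share with the paper but at least flag: if all $x_i=0$ then $\mu\equiv 0$, which has no atoms under Definition \ref{d-atom}, so ``finitely purely atomic'' holds for it only by convention --- that deserves an explicit remark rather than the word ``trivial''.
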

\begin{proof} First of all, thanks to the Remark \ref{ex4.2}, $\mu$ is finitely additive. Since $m$ is monotonic, 
it is obvious that $m(E)=0$ implies that $\int_E f dm=0$, so $\mu$ is absolutely continuous with respect to $m$. 
\\
Now, let $A$ be an atom for $m$, and assume that $A$ is not an atom for $\mu$. If $A$ is not null for $\mu$ then 
there exists a measurable $B\subset A$ such that $\mu(B)\neq 0$ and $\mu(A\setminus B)\neq 0$: but $B$ or
 $A\setminus B$ is null for $m$, and so also for $\mu$, so the only possibility is that $A$ is null for $\mu$.
\\
This shows that each atom $A$ for $m$ is also an atom for $\mu$, unless $\mu(B\cap A)=0$ for all $B\in \mathcal{A}$, 
and this implies that $\mu$ is purely finitely atomic. 
\end{proof}

\begin{proposition}
Let $m$ and $f$ as above, and denote by $A_1,A_2,...,A_k$ the atoms of $m$. For each $i=1,...,k$, let $a_i$ 
denote the integral of $f$ in $A_i$. Then, for every set $B\in \mathcal{A}$,
$$\mu(B)=\sum_{i=1}^k \frac{a_i}{m(A_i)}m(B\cap A_i)$$
\end{proposition}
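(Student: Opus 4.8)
The plan is to reduce the global identity to a purely local computation on each atom, exploiting that $\mu$ is already known, by Proposition \ref{necessary}, to be finitely additive and to vanish on $m$-null sets. First I would write $B=\bigcup_{i=1}^{k}(B\cap A_i)$, a disjoint union since the atoms $A_i$ are pairwise disjoint and cover $T$; finite additivity of $\mu$ then gives
\[ \mu(B)=\sum_{i=1}^{k}\mu(B\cap A_i). \]
Hence it suffices to prove, for each fixed $i$, the per-atom identity $\mu(B\cap A_i)=\frac{a_i}{m(A_i)}\,m(B\cap A_i)$, after which summation over $i$ yields the claim.

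Next I would fix $i$ and observe that $B\cap A_i\subseteq A_i$, with $A_i$ an atom of $m$, so Lemma \ref{1di3}.1) leaves only two possibilities. If $m(B\cap A_i)=0$, then $\int_{B\cap A_i} f\,dm=0$, since the integral over an $m$-null set vanishes (as noted in the proof of Proposition \ref{necessary}), while the right-hand side $\frac{a_i}{m(A_i)}\,m(B\cap A_i)$ is likewise zero, so the two sides agree. If instead $m(B\cap A_i)>0$, then Lemma \ref{1di3}.1) together with the null-additivity of $m$ gives that $B\cap A_i$ is itself an atom with $m(B\cap A_i)=m(A_i)$ and $m(A_i\setminus(B\cap A_i))=0$. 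Applying finite additivity of $\mu$ to the disjoint decomposition $A_i=(B\cap A_i)\cup(A_i\setminus(B\cap A_i))$, and again the vanishing of $\mu$ on null sets, yields
\[ \mu(B\cap A_i)=\mu(A_i)=a_i=\frac{a_i}{m(A_i)}\,m(A_i)=\frac{a_i}{m(A_i)}\,m(B\cap A_i), \]
which is exactly the desired identity on the $i$-th atom.

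Finally I would combine the two cases: summing the per-atom identities over $i=1,\dots,k$ reproduces the stated formula. The only genuinely delicate point, and the one worth isolating, is the dichotomy forced by Lemma \ref{1di3}.1): a priori $m(B\cap A_i)$ could be strictly between $0$ and $m(A_i)$, which would destroy the proportionality, but null-additivity rules this out by forcing $m(B\cap A_i)\in\{0,m(A_i)\}$. Everything else — finite additivity and absolute continuity of $\mu$ — has already been supplied by Proposition \ref{necessary}, so no further machinery is needed; I would only check that each $\mu(B\cap A_i)$ is well defined, which is immediate in the null case (the net of Riemann sums is identically $0$) and follows in the positive case from the standing Gould integrability of $f$, via the transfer of total measurability to measurable subsets in Remark \ref{rem-mstella}.2) and Corollary \ref{cinque}.
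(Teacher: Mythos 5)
Your proof is correct, and it takes a genuinely more local route than the paper's. Both arguments open identically, using finite additivity of $\mu$ (Proposition \ref{necessary}) to write $\mu(B)=\sum_{i=1}^{k}\mu(B\cap A_i)$; but from there the paper classifies indices by the behavior of $\mu$: it introduces the set $H$ of indices for which $A_i$ is an atom of $\mu$ (relying on the $\mu$-atomicity statement of Proposition \ref{necessary}) and the subset $H_B$ where $\mu(A_i\cap B)=\mu(A_i)$, then evaluates the two sides of the formula separately and matches them both with $\sum_{i\in H_B}a_i$. You instead prove the termwise identity $\mu(B\cap A_i)=\frac{a_i}{m(A_i)}\,m(B\cap A_i)$ directly, with the dichotomy running through $m$ rather than $\mu$: Lemma \ref{1di3}.1) with null-additivity forces $m(B\cap A_i)\in\{0,m(A_i)\}$, and absolute continuity plus finite additivity of $\mu$ dispose of each case. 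Your version is leaner --- it never uses the fact that the $A_i$ are atoms of $\mu$, only finite additivity and absolute continuity --- and it makes explicit a step the paper uses silently: the equality $\sum_{i\in H_B}\frac{a_i}{m(A_i)}m(B\cap A_i)=\sum_{i\in H_B}a_i$ in the paper's last display needs precisely $m(B\cap A_i)=m(A_i)$ for $i\in H_B$, which is your Lemma \ref{1di3}.1) argument. What the paper's bookkeeping with $H$ and $H_B$ buys is reuse: the same index sets reappear verbatim in the proof of the Radon--Nikod\'ym theorem that follows. Your closing check that each $\mu(B\cap A_i)$ is well defined (trivially in the null case; via Theorem \ref{ex4.10}, Remark \ref{rem-mstella}.2) and Theorem \ref{ex4.7} in the atom case) is sound and addresses a point the paper leaves implicit.
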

\begin{proof} Indeed, for every $B\in \mathcal{A}$, one has
$$\mu(B)=\int_B f dm=\sum_{i=1}^k\mu(B\cap A_i).$$
Now, for each index $i$, there are two possibilities: either $A_i$ is an atom for $\mu$, or $a_i=\mu(A_i)=0$. 
Let  $H$ denote the set of indexes $i$ for which $A_i$ is an atom for $\mu$, and $H_B$ denote the subset of $H$ 
of those indexes for which $\mu(A_i\cap B)=\mu(A_i)$. Then clearly
$$\mu(B)=\sum_{i\in H_B}\mu(B\cap A_i)=\sum_{i\in H_B}\mu(A_i)=\sum_{i\in H_B}a_i.$$
Now, if $i\notin H$, we have $a_i=0$ and so
$$\sum_{i=1}^k \frac{a_i}{m(A_i)}m(B\cap A_i)=\sum_{i\in H} \frac{a_i}{m(A_i)}m(B\cap A_i).$$
Furthermore, if $i\in H\setminus H_B$, this means that $\mu(A_i\cap B)=0$ and so $\mu(A_i\setminus B)=\mu(A_i)$,
 hence $m(A_i\setminus B)>0$ and therefore $m(A_i\cap B)=0$. For this reason
$$\sum_{i\in H} \frac{a_i}{m(A_i)}m(B\cap A_i)=\sum_{i\in H_B} \frac{a_i}{m(A_i)}m(B\cap A_i)=
\sum_{i\in H_B} a_i.$$
This concludes the proof.
\end{proof}

Now we can state the following version of the Radon-Nikod\'ym theorem.

\begin{theorem}
Let $m:\mathcal{A}\to [0,+\infty)$ be a null-additive, monotone, and finitely purely atomic measure.
 Let also $\mu:\mathcal{A}\to X$ be any finitely purely atomic finitely additive measure, absolutely
 continuous with respect to $m$. Then there exists a function $f:T\to X$, Gould integrable with respect to $m$, and satisfying
$$\int_B f dm=\mu(B),$$
for all $B\in \mathcal{A}.$
\end{theorem}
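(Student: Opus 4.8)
The plan is to construct $f$ explicitly on each atom of $m$, using the atomic structure of both $m$ and $\mu$. Since $m$ is finitely purely atomic, write $T=\bigcup_{i=1}^k A_i$ as a disjoint union of atoms of $m$. The absolute continuity hypothesis ($m(E)=0\Rightarrow\mu(E)=0$) guarantees that $\mu$ respects the null sets of $m$, so I expect each $A_i$ to be either an atom for $\mu$ or $\mu$-null, exactly as in the proof of Proposition \ref{necessary}. On each atom $A_i$ I will simply set $f$ equal to a constant vector $c_i\in X$, chosen so that the Gould integral $\int_{A_i} f\,dm=c_i\,m(A_i)$ reproduces $\mu(A_i)$. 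Concretely, if $m(A_i)>0$ (which holds since $A_i$ is an atom of $m$), I set
$$c_i=\frac{\mu(A_i)}{m(A_i)},$$
and define $f(t)=c_i$ for all $t\in A_i$. This makes $f$ a simple function, hence $m$-totally-measurable and, by Corollary \ref{ex4.8}, Gould $m$-integrable on $T$.

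\textbf{The key steps, in order.} First I would verify that this piecewise-constant $f$ is well-defined: the $A_i$ are pairwise disjoint and cover $T$, so $f$ is unambiguous. Second, I would confirm Gould integrability: $f$ is simple, so it is $m$-totally-measurable, and since $m$ is null-additive, monotone and finitely purely atomic, Corollary \ref{ex4.8} gives $m$-integrability on $T$. Third, by Theorem \ref{4.4} (or rather its restriction-to-atom analogue via Corollary \ref{cinque}), the integral of a constant $c_i$ over the atom $A_i$ equals $c_i\,m(A_i)=\mu(A_i)$. Finally, for an arbitrary $B\in\mathcal{A}$, I would compute using finite additivity of the integral measure (Remark \ref{ex4.2}) together with the preceding Proposition:
$$\int_B f\,dm=\sum_{i=1}^k\int_{B\cap A_i} f\,dm=\sum_{i=1}^k c_i\,m(B\cap A_i)=\sum_{i=1}^k\frac{\mu(A_i)}{m(A_i)}m(B\cap A_i),$$
and this last expression equals $\mu(B)$ precisely by the identity established in the Proposition immediately preceding this theorem.

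\textbf{The main obstacle} I anticipate is the definition of $c_i$ when $\mu(A_i)=0$ but $A_i$ is a $\mu$-null atom, and more delicately the case $m(A_i)>0$ with the ratio $\mu(A_i)/m(A_i)$ genuinely needing the integral over an atom to be a \emph{constant} times $m(A_i)$. The subtle point is matching $\int_{B\cap A_i} f\,dm$ with $\frac{\mu(A_i)}{m(A_i)}m(B\cap A_i)$ for \emph{every} $B$, not just for $B=A_i$: on an atom, either $m(B\cap A_i)=m(A_i)$ or $m(B\cap A_i)=0$ (by Lemma \ref{1di3}), and I must check that in each case the integral of the constant $c_i$ over $B\cap A_i$ gives the right value — this is exactly the bookkeeping already carried out in the Proposition preceding the theorem, so the heavy lifting is done there. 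If $\mu(A_i)=0$, any choice of $c_i$ (including $c_i=0$) works, so there is harmless freedom in the definition of $f$ on the $\mu$-null atoms; the resulting $f$ is not unique, reflecting the non-additive, atomic nature of the setting.
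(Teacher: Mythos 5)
Your proposal is correct and takes essentially the same route as the paper: the paper defines the identical simple function $f=\sum_{i=1}^k \frac{\mu(A_i)}{m(A_i)}\,1_{A_i}$, notes it is integrable, and verifies $\int_B f\,dm=\mu(B)$ by the same atom-or-null dichotomy and the $H$, $H_B$ bookkeeping you describe. One small caveat: the Proposition you cite is stated only for $\mu$ arising as an integral measure, so it does not literally cover the arbitrary $\mu$ of the theorem; the paper handles this by repeating the computation inline (``As before\dots''), which is legitimate because that argument uses only finite additivity, finite pure atomicity and absolute continuity of $\mu$ --- exactly the theorem's hypotheses.
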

\begin{proof}
 Let $A_1,...,A_k$ denote the atoms of $m$. Then each $A_i$ is also an atom for $\mu$, unless $\mu(B\cap A_i)=0$ for all $B\in \mathcal{A}$.
So, define $f:T\to X$ as follows:
$$f=\sum_{i=1}^k \frac{\mu(A_i)}{m(A_i)}1_{A_i}.$$
Of course, $f$ is simple, and therefore integrable. Moreover, for each set $B\in \mathcal{A}$
$$\int_B f dm=\sum_{i=1}^k \frac{\mu(A_i)}{m(A_i)}m(B\cap A_i).$$
As before, let  $H$ denote the set of indexes $i$ for which $A_i$ is an atom for $\mu$, and $H_B$ denote the subset of
 $H$ of those indexes for which $\mu(A_i\cap B)=\mu(A_i)$. 
Then
$\mu(B)=\sum_{i\in H_B}\mu(A_i),$
and
$$\int_B f dm=\sum_{i\in H} \frac{\mu(A_i)}{m(A_i)}m(B\cap A_i)=\sum_{i\in H_B}\frac{\mu(A_i)}{m(A_i)}m(B\cap A_i)=\sum_{i\in H_B}\mu(A_i)=\mu(B)$$
as desired.
\end{proof}

%==========================
\section*{Acknowledgements}
The first and the last  authors have been supported by University of Perugia -- Department of Mathematics and Computer Sciences-- Grant Nr 2010.011.0403 and, respectively, by Prin "Metodi logici per il trattamento dell'informazione",  "Descartes" and by the Grant prot. U2014/000237 of GNAMPA - INDAM (Italy).

\end{document}